\newtheorem{thm}{Theorem}[section]
\newtheorem{prop}[thm]{Proposition}
\newtheorem{lem}[thm]{Lemma}
\newtheorem{conjecture}[thm]{Conjecture}
\theoremstyle{definition}
\newtheorem{defn}[thm]{Definition}
\newtheorem{remark}[thm]{Remark}
\newtheorem{notation}[thm]{Notation}
\def\A{\ensuremath{\mathcal{A}}}
\def\C{\ensuremath{\mathcal{C}}}
\def\D{\ensuremath{\mathcal{D}}}
\def\Q{\ensuremath{\mathcal{Q}}}
\def\Sym{\operatorname{Sym}}
\def\FF{\ensuremath{\mathbb{F}}}
\def\ZZ{\ensuremath{\mathbb{Z}}}
\def\aa{\mathbf a}
\def\cc{\mathbf c}
\def\ii{\mathbf i}
\def\jj{\mathbf j}
\def\kk{\mathbf k}
\def\xx{\mathbf x}
\def\dd{\boldsymbol{\delta}}
\def\ee{\boldsymbol{\epsilon}}
\def\rk{\operatorname{rk}}
\def\lcm{\operatorname{lcm}}
\def\Isom{\mathrm{Isom}}
\def\im{\operatorname{Im}}
\def\sgn{\operatorname{sgn}}
\def\diag{\operatorname{diag}}
\newcommand{\GL}{\operatorname{GL}\nolimits}
\newcommand{\SL}{\operatorname{SL}\nolimits}
\newcommand{\PSL}{\operatorname{PSL}\nolimits}
\newcommand{\GU}{\operatorname{GU}\nolimits}
\newcommand{\SU}{\operatorname{SU}\nolimits}
\newcommand{\PSU}{\operatorname{PSU}\nolimits}
\newcommand{\PGU}{\operatorname{PGU}\nolimits}
\newcommand{\PGL}{\operatorname{PGL}\nolimits}
\newcommand{\ls}[2]{{}^{{#1}\!}{#2}}
\newcommand{\qbox}[1]{\quad\hbox{#1}\quad}
\newcommand{\ovl}[1]{\overline{#1}}
\date{\today}
\title{A geometric approach to Quillen's conjecture}
\author{Antonio D\'{i}az Ramos}
\address{Departamento de {\'A}lgebra, Geometr{\'\i}a y Topolog{\'\i}a,
Universidad de M{\'a}\-la\-ga, Apdo correos 59, 29080 M{\'a}laga,
Spain.}
\thanks{First author supported by MEC grant MTM2016-78647-P and Junta de Andaluc\'ia grant FQM-213.} 
\email{adiazramos@uma.es}
\author{Nadia Mazza}
\address{Department of Mathematics and Statistics\\
Lancaster University\\
Lancaster, LA1 4YF\\
UK.}
\email{n.mazza@lancaster.ac.uk}
\begin{document}

\begin{abstract}
We introduce {\em admissible collections} for a 
finite group $G$ and use them to prove that most of the finite classical
groups in non-defining characteristic satisfy the
{\em Quillen dimension at $p$ property}, a strong version of Quillen's 
conjecture, at a given odd prime divisor $p$ of $|G|$. 
Compared to the methods in \cite{AS1993}, our techniques are 
simpler.

\end{abstract}

\subjclass[2010]{55U10; 05E45, 20J99.}
\maketitle

\smallskip

\section{Introduction}
\label{section:introduction}

Let $G$ be a finite group and let $p$ be a prime number dividing the
order of $G$. Let $\A_p(G)$ be the poset of all non-trivial elementary
abelian $p$-subgroups of $G$ ordered by inclusion and let $|\A_p(G)|$
be its realization as a topological space. This space is a simplicial
complex which has as $n$-simplices the 
chains of length $n$ in $\A_p(G)$: 
\[
P_0<P_1<\ldots<P_n\text{, with $P_i\in \A_p(G)$.}
\]
We denote by $O_p(G)$ be the largest normal $p$-subgroup of $G$, and by
$\rk_p(G)$ the $p$-rank of $G$, that is, the maximum of the ranks of the
elementary abelian $p$-subgroups of $G$. A finite elementary abelian
$p$-group $E$ has rank $r$ if $|E|=p^r$, or equivalently,
$r=\dim_{\FF_p}E$, when we regard $E$ as an $\FF_p$-vector space.

\begin{conjecture}[Quillen's conjecture \cite{Quillen1978}]
If $|\A_p(G)|$ is contractible then $O_p(G)\neq 1$.
\end{conjecture}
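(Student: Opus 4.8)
The plan is to prove the contrapositive in a sharp homological form. Since a contractible space has vanishing reduced homology, it suffices to show that $O_p(G)=1$ forces $\tilde H_*\bigl(|\A_p(G)|;\FF_p\bigr)\neq 0$. I would aim for the strongest such statement, the \emph{Quillen dimension at $p$ property}, namely
\[
\tilde H_{\rk_p(G)-1}\bigl(|\A_p(G)|;\FF_p\bigr)\neq 0,
\]
that the homology is already nonzero in the top degree permitted by the $p$-rank. Establishing this for every $G$ with $O_p(G)=1$ would settle the conjecture, so the whole problem is reorganized around producing one explicit nonzero top-degree homology class.

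First I would cut the group down to a manageable shape. Following the reduction philosophy of Quillen \cite{Quillen1978} and of Aschbacher--Smith \cite{AS1993}, one analyses the normal $p'$-part and the generalized Fitting subgroup, reducing to the situation in which $F^*(G)$ is a direct product of nonabelian finite simple groups on which $G$ acts by permutation and outer automorphisms. The behaviour of $\A_p$ under direct products (via a join/K\"unneth bookkeeping) and under the extension $F^*(G)\trianglelefteq G$ then transfers the desired nonvanishing from the almost-simple constituents to $G$ itself, so the problem localizes to the simple and almost-simple groups.

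The core is therefore the simple-group case, and this is where I would bring in the \emph{admissible collections} of the paper. The idea is to locate inside $G$ a family of elementary abelian $p$-subgroups whose associated subcomplex of $\A_p(G)$ is, up to homotopy, a join (a wedge of $(\rk_p(G)-1)$-spheres) carrying a fundamental class that maps to a nonzero element of $\tilde H_{\rk_p(G)-1}$. For the finite classical groups in non-defining characteristic this means exploiting a maximal torus or a product of $\GL$-, $\GU$- or $\Sp$-type subgroups that contains an elementary abelian $p$-subgroup of maximal rank, together with the action of its normalizer (a Weyl-group-type quotient) to certify that the top class survives.

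The hard part is precisely that this last step does not always work: the Quillen dimension property is genuinely false for some groups, so whenever the candidate top-degree class collapses one is thrown back on a separate and more delicate argument. Pinning down exactly which simple groups admit an admissible collection of the right dimension, disposing of the small exceptional cases, and controlling how the component action feeds into $\A_p(G)$, is the crux --- and it is this case-by-case verification, rather than any single clean homological identity, that keeps the general conjecture out of reach.
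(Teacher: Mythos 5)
The statement you are asked to prove is labelled a \emph{conjecture}, and the paper does not prove it: it remains open, and the paper only establishes it for the specific families listed in Theorem~\ref{thm:main} by exhibiting admissible collections. Your proposal is therefore not comparable to ``the paper's own proof'' because no such proof exists; what you have written is a strategy outline, and you concede in your final paragraph that the crux --- the case-by-case verification over the simple groups --- is exactly what ``keeps the general conjecture out of reach.'' A proof that ends by acknowledging the decisive step is unresolved is not a proof; this is the fundamental gap.

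There is also a concrete mathematical error in your opening reduction. You propose to establish $\Q\D_p$ for \emph{every} $G$ with $O_p(G)=1$, calling this ``the strongest such statement.'' But $\Q\D_p$ is known to fail for some groups with trivial $p$-core: Aschbacher and Smith explicitly list $p$-extensions of simple groups that do not satisfy $\Q\D_p$ (the unitary components $U_n(q)$ with $q\equiv-1\pmod p$ and $q$ odd are the obstruction in their Main Theorem), and this paper likewise excludes, e.g., unitary groups with $p\mid(q+1)$ and $\PSL_3(q)$ with $q\equiv1\pmod 3$ at $p=3$, where machine computation suggests no admissible collection exists. So the implication ``$O_p(G)=1\Rightarrow\Q\D_p$'' that your plan rests on is false as stated, and any argument routed exclusively through top-degree nonvanishing must either restrict the class of groups (as the paper and \cite{AS1993} do) or find lower-degree homology in the exceptional cases --- which is precisely the unsolved part of the problem. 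Your middle paragraphs do correctly describe the Aschbacher--Smith reduction to components and the paper's mechanism of admissible collections, but description of a known partial strategy does not close the conjecture.
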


In \cite{AS1993}, the authors introduce the following property with rational coefficients. Here, we  work with integral coefficients and by $\widetilde H_*(|\A_p(G)|)$ we mean reduced integral simplicial homology (cf. \cite[Ch. 7]{rotman}), i.e., $\widetilde H_*(|\A_p(G)|)=H_*(|\A_p(G)|;\ZZ)$ for $*>0$ and $H_0(|\A_p(G)|;\ZZ)=\widetilde H_0(|\A_p(G)|)\oplus \ZZ$.

\begin{defn}[$\Q\D_p$]
\label{def:QDp}
The finite group $G$ with $r=\rk_p(G)$ has the {\em Quillen dimension at
  $p$} property, written $\Q\D_p$, if 
\[
\widetilde H_{r-1}(|\A_p(G)|)\neq 0.
\]
\end{defn}

Note that $r-1$ is the top dimension for which
$\widetilde H_*(|\A_p(G)|)$ can possibly be non-zero. Aschbacher and
Smith's approach to Quillen's conjecture \cite{AS1993} is the best
result so far.
In \cite[Theorem 3.1]{AS1993}, they consider $p$-extensions of finite
simple groups, that is, almost-simple groups with an
elementary abelian $p$-group inducing outer automorphisms. Aschbacher and
Smith prove that most of these $p$-extensions satisfy $\Q\D_p$, and
they list those which do not. Their main result asserts that if $p>5$ and $G$ does not have a unitary component $U_n(q)$ with
$q\equiv-1\pmod{p}$ and $q$ odd, then $G$ satisfies Quillen's
conjecture \cite[Main Theorem, p. 474]{AS1993}. 
%As a corollary, they get that if all $p$-extensions $LE$ with $L=U_n(q)$ satisfy
%$\Q\D_p$ whenever $q\equiv -1\pmod{p}$ and $q$ is odd, then Quillen's
%conjecture hold for $p>5$ \cite[Corollary, p. 475]{AS1993}.
  
\smallskip
In \cite{Diaz2016}, the first author proves that Quillen's conjecture
holds for solvable and $p$-solvable finite groups via new geometrical methods.
In the present paper, we elaborate on these methods with the objective
to find shorter and easier proofs of the results in \cite{AS1993}. We deal 
 with the alternating, symmetric and classical finite groups in non-defining
characteristic and for an odd prime $p$. By a finite {\em classical group}, we mean a finite
linear, unitary, symplectic or orthogonal group. With these methods, we prove the following result. 

\begin{thm}\label{thm:main}
Quillen's conjecture holds for the group $G$ at the odd prime $p$ in the following cases:
\begin{enumerate}
\item[(i)] $G$ is an alternating or symmetric group of degree $n$ for
  $p\geq 5$, and for $p=3$, $n=4,5,8$. 
\item[(ii)] $G$ is a linear, unitary, symplectic or orthogonal group
  defined over a field of characteristic different from $p$, unless $p|(q+1)$ and $G$ is a unitary group defined over $\mathbb F_{q^2}$ or $G$ is a
  symplectic or orthogonal group defined over $\mathbb F_2$ or $G=\PSL_3(q)$ with $q\equiv 1\pmod{3}$.
\end{enumerate}
\end{thm}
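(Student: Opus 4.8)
The plan is to prove the stronger property $\QD_p$ (Definition~\ref{def:QDp}) whenever possible, since $\QD_p$ implies Quillen's conjecture: if $\widetilde H_{r-1}(|\A_p(G)|)\neq 0$ then $|\A_p(G)|$ is not contractible, and the conjecture follows by contrapositive. For the alternating and symmetric groups in part~(i), I would first reduce to understanding $\A_p(\Sym_n)$ combinatorially. An elementary abelian $p$-subgroup acts on $\{1,\dots,n\}$, and its orbits have size a power of $p$; the poset $\A_p(\Sym_n)$ therefore decomposes according to the partition of $n$ into $p$-power blocks and fixed points. The $p$-rank of $\Sym_n$ is $\lfloor n/p\rfloor$ when $n<p^2$ and grows with wreath products in general, so the small cases $p=3$, $n=4,5,8$ must be treated by hand (e.g.\ $\Sym_4$ and $\Sym_5$ have $3$-rank $1$, so $|\A_3|$ is a discrete set of points and $\widetilde H_0\neq 0$ is immediate; $n=8$ has $3$-rank $2$ and requires computing $\widetilde H_1$). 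For $p\geq 5$, the idea is to exhibit enough disjointly-supported copies of $\ZZ/p$ to force a nonvanishing top homology class, using the admissible collections machinery promised in the abstract.

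For part~(ii), the classical groups, I would set up the standard framework in which an elementary abelian $p$-subgroup of a classical group $G$ over $\FF_q$ (with $p\nmid q$) is built from the multiplicative structure of $\FF_{q^k}$ for the relevant $k$ (the order of $q$ modulo $p$, or a variant in the unitary case). The natural module decomposes into eigenspaces, and a maximal elementary abelian $p$-subgroup is realized as a product of cyclic $p$-groups sitting inside a torus; its normalizer involves a symmetric or general linear group permuting the eigenspaces. Concretely, I would identify a subgroup of the form $(\ZZ/p)^r$ whose Weyl-type normalizer quotient is a symmetric group $\Sym_r$ acting by permuting the $r$ summands, reducing the homology computation for $\A_p(G)$ to one resembling the $\Sym_r$ computation of part~(i). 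The admissible collections should let me replace the full poset $|\A_p(G)|$ by a smaller, more tractable subposet with the same top homology, so that a single explicit nonzero cycle in dimension $r-1$ can be written down and shown not to be a boundary.

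The main obstacle is twofold. First, the exceptional cases in the statement---$p\mid(q+1)$ for unitary groups over $\FF_{q^2}$, symplectic and orthogonal groups over $\FF_2$, and $\PSL_3(q)$ with $q\equiv1\pmod 3$---are precisely where the clean eigenspace/torus picture degenerates: either the relevant field extension forces the outer automorphism or diagonal structure to interfere with the top-dimensional cycle, or the small base field $\FF_2$ collapses the combinatorics. These must be genuinely excluded, matching the unitary exclusion in the Aschbacher--Smith main theorem. Second, and more delicate, is verifying that the explicit top cycle is not a boundary: this is where the admissible collections are essential, since a direct homology computation on the full poset is intractable, and the key technical lemma will be showing that the relevant subposet is homotopy equivalent (or at least homologically equivalent in top degree) to a wedge of spheres of dimension $r-1$. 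I expect the bulk of the work, and the real content beyond \cite{AS1993}, to lie in establishing that the admissible-collection reduction preserves $\widetilde H_{r-1}$, after which each family of classical groups follows by a uniform rank count combined with the $\Sym_r$-type calculation from part~(i).
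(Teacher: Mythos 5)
Your high-level plan---prove the stronger property $\Q\D_p$ by exhibiting an explicit non-bounding cycle in degree $r-1$, and handle the small cases $p=3$, $n=4,5,8$ and the listed exceptional classical groups separately---matches the paper's strategy. But the proposal is missing the paper's central mechanism, and in its place puts a construction that is both mischaracterized and much harder than what is actually needed. An \emph{admissible collection} is not a device for replacing $|\A_p(G)|$ by a homotopy-equivalent subposet, and the paper proves no wedge-of-spheres statement anywhere; it is the concrete data of a maximal $E=\langle e_1,\dots,e_r\rangle\in\A_p(G)$ together with pairwise commuting elements $c_i\in C_G(E_i)\setminus N_G(\langle e_i\rangle)$, where $E_i=\langle e_1,\dots,\widehat{e_i},\dots,e_r\rangle$. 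From this data one writes down the cycle directly: let $Z_{E}$ be the signed sum of all complete flags $E_{[i_1,\dots,i_{r-1}]}<\dots<E_{i_1}<E$, and set $Z=\sum_{\dd\in\{0,1\}^r}(-1)^{\delta_1+\cdots+\delta_r}\,\ls{\cc^{\dd}}{Z_{E}}$ with $\cc^{\dd}=c_1^{\delta_1}\cdots c_r^{\delta_r}$. The whole point of the hypotheses is that $c_{i_1}\in C_G(E_{i_1})$ makes the boundary faces that drop the bottom term $E_{[i_1,\dots,i_{r-1}]}$ pair off with opposite signs between $\dd$ and the tuple differing from it in coordinate $i_1$, so $d(Z)=0$, while $c_i\notin N_G(\langle e_i\rangle)$ together with the faithfulness of the collection guarantees that the $2^r$ translates of the top simplices do not collide, so $Z\neq 0$. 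You identify ``verifying that the explicit top cycle is not a boundary'' as the delicate step requiring the subposet reduction, but that step is in fact the trivial one: each simplex of $Z$ is a saturated flag terminating in the \emph{maximal} element $E$, hence is a face of no $r$-simplex at all. The genuinely delicate steps---that $Z$ is closed and nonzero---are exactly the ones your proposal does not address.

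The second gap is in part (ii). Reducing the classical groups to ``the $\Sym_r$ computation of part (i)'' via the Weyl-type action permuting eigenspaces is not what is done and would not by itself produce the required elements: the symmetric group permuting the blocks normalizes every $\langle e_i\rangle$, whereas what one needs is an element $c_j$ supported on the $j$-th block $V_j$ lying in $\Isom(V_j)$ but \emph{outside} $\GL_1^\epsilon(q^e)\rtimes C_e$, so that it centralizes $E_j$ yet fails to normalize $\langle e_j\rangle$. The paper obtains these by reading off $C_G(e_i)$, $C_G(E_i)$ and $N_G(\langle e_i\rangle)$ from the Gorenstein--Lyons description of centralizers of semisimple elements, and the excluded cases ($p\mid q+1$ unitary, symplectic/orthogonal over $\FF_2$, $\PSL_3(q)$ with $q\equiv1\pmod 3$) are exactly where either $O_p(G)\cap Z(G)>1$ obstructs any such collection or the proper containment $\GL_1^\epsilon(q^e)\rtimes C_e\lneq\Isom(V_j)$ fails. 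Without the explicit cycle construction and these centralizer computations, the proposal does not constitute a proof.
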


The structure of the paper is as follows.
In Section~\ref{section:faithfulcollections}, we define
{\em faithful collections} and discuss some of their properties. Such
a collection for a group $G$ consists of an arrangement of elementary
abelian $p$-subgroups of $G$ together with certain elements of $G$
that centralize/normalize these $p$-subgroups. In
Section~\ref{section:generalization}, we find further conditions on
faithful collections which imply that a finite group has the Quillen
dimension at $p$ property $\Q\D_p$, and we call such faithful
collections {\em admissible}.
In Section ~\ref{section:pextensionsB1}, we briefly study when the
Quillen dimension at $p$ property of a given finite group is inherited
by its quotient groups.
Then we show the Quillen dimension at $p$ property for the symmetric
and alternating groups, and for the finite classical groups in
non-defining characteristic, excluding certain cases when $p=3$.
Thus Theorem \ref{thm:main} summarizes these results.
We also present some limitations of our method and open questions.

\section{Faithful collections}
\label{section:faithfulcollections}

In this section, we define faithful collections for a finite
group $G$. Given $E\in\A_p(G)$, we regard $E$ as an $\FF_p$-vector
space (generally written additively) and a faithful collection for
$G$, if it exists, is a certain arrangement of hyperplanes of $E$ and
elements of $G$ subject to certain constraints.

\begin{defn}\label{def:sequences}
Let $r$ be a positive integer. For any integer $l$ with
$0\leq l\leq r$, we define an \emph{$l$-tuple for $r$} to be
an ordered sequence of elements $\ii=[i_1,\ldots,i_l]$ with $1\leq i_j\leq r$
and no repetition. By $S^r_l$ we denote the set of all
$l$-tuples for $r$ for a given $l$, and by
$S^r=\cup_{l=0,\ldots,r}S^r_l$ the set of all $l$-tuples
for $r$ for all $0\leq l\leq r$. 
\end{defn}

\begin{defn}\label{def:sequencesubspace}
Let $E=\langle e_1,\dots,e_r \rangle$ be an elementary abelian $p$-subgroup of
$G$ of rank $r$. For each $l$-tuple $\ii=[i_1,\ldots,i_l]\in S^r_l$, set:
\[
E_\ii=E_{[i_1,\ldots,i_l]}=
\langle e_1,\ldots,\widehat{e_{i_1}},\ldots,\widehat{e_{i_l}},\ldots,e_r\rangle,
\]
for the subgroup of $E$ generated by all the $e_i$, except
$e_{i_1},\dots,e_{i_l}$.
\end{defn}

We abuse notation and write
$E_i=E_{[i]}=\langle e_1,\ldots,\widehat{e_i},\ldots,e_r\rangle$. In
Definition~\ref{def:sequencesubspace}, the subgroups $E_i$ are
subspaces of $E$ of codimension $1$. Note
that we have the following: 
\[
E_{[i_1,\ldots,i_l]}=E_{i_1}\cap\ldots\cap E_{i_l}\qbox,
E_\emptyset=E=\langle E_i,e_i\rangle
\qbox, E_{[1,2,\ldots,r]}=\{0\},
\]
and, in particular:
\begin{equation}\label{equ:eideterminedbyhyperplanesuptopower}
\langle e_i\rangle =E_{[1,\ldots,\hat i,\ldots,r]}=E_1\cap \ldots \cap\widehat{E_i}\cap\ldots \cap E_r.
\end{equation} 

\begin{remark}\label{remark:equalsequencesubspaces}
Note that 
\[
E_{[i_1,\ldots,i_l]}=E_{[i'_1,\ldots,i'_{l'}]}\Leftrightarrow\;l=l'\qbox{and}
\{i_1,\ldots,i_l\}=\{i'_1,\ldots,i'_{l'}\}. 
\]
\end{remark}

\begin{defn}[Faithful collection]
\label{def:failthfulcollection}
Let $E=\langle e_1,\dots,e_r\rangle$ be an elementary abelian
$p$-subgroup of $G$ of rank $r$. For $\cc=(c_1,\ldots,c_r)\in G^r$, $\ii\in S^r$ and $\ee=(\epsilon_1,\ldots,\epsilon_r)\in\{-1,0,1\}^r$ set  
\[
\ls{\cc^{\ee}}E_\ii=\ls{c_1^{\epsilon_1}\ldots c_r^{\epsilon_r}}E_\ii.
\]
We say that the collection $\{E_i,c_j\}$ is \emph{faithful} if for any
such $\ee$ and $\ii$ we have  
\[
\ls{\cc^{\ee}}E_\ii\leq E \Rightarrow \ls{\cc^{\ee}}E_\ii=E_\ii.
\]
\end{defn}
So, a collection is faithful if the subspaces $E_\ii$ of $E$ in Definition
\ref{def:sequencesubspace} can be conjugated by the elements
$\cc^{\ee}$ of $G$ to a subspace of $E$ if and only if $\cc^{\ee}$ normalizes $E_\ii$. This mimics a
property of coprime actions, see \cite[Proposition 2.2(1)]{Diaz2016}.
Note that if $\ii\in S_0^r$ or $S_r^r$, the implication  
\[
\ls{\cc^{\ee}}E_\ii\leq E \Rightarrow \ls{\cc^{\ee}}E_\ii=E_\ii.
\]
is vacuous. 

The next result is a characterization of faithful collections in terms
of the elements $e_i$ generating $E$ in
Definition~\ref{def:sequencesubspace}. By Equation
\eqref{equ:eideterminedbyhyperplanesuptopower}, these elements are
determined by the hyperplanes $E_i$ up to a power. 

\begin{lem}\label{lem:characterizationfaithfulcollection}
Let $E=\langle e_1,\dots,e_r\rangle$ be an elementary abelian
$p$-subgroup of $G$ of rank $r$. Let $\cc=(c_1,\ldots, c_r)\in G^r$. Then the collection $\{E_i,c_j\}$ is faithful if and only if for any $\ee\in\{-1,0,1\}^r $ and any $1\leq i\leq r$ we have 
\[
\ls{\cc^{\ee}}\langle e_i\rangle\leq E \Rightarrow
\ls{\cc^{\ee}}\langle e_i\rangle=\langle e_i\rangle. 
\]
\end{lem}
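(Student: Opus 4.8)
The plan is to prove the two implications separately. The forward one is an immediate specialisation of the definition, while the backward one is the real content: it says that controlling the lines $\langle e_i\rangle$ already forces faithfulness on every subspace $E_\ii$. For the forward implication, I would simply observe that by Equation~\eqref{equ:eideterminedbyhyperplanesuptopower} each line $\langle e_i\rangle$ coincides with the subspace $E_\jj$ for the tuple $\jj=[1,\ldots,\hat i,\ldots,r]\in S^r_{r-1}$. Hence, for a faithful collection, the required implication
\[
\ls{\cc^{\ee}}\langle e_i\rangle\leq E\Rightarrow\ls{\cc^{\ee}}\langle e_i\rangle=\langle e_i\rangle
\]
is nothing but Definition~\ref{def:failthfulcollection} applied to this particular $\jj$, and ranges over all $\ee$ and all $i$; so this direction is immediate.

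For the backward implication, fix $\ee\in\{-1,0,1\}^r$ and take any $\ii=[i_1,\ldots,i_l]\in S^r$ with $\ls{\cc^{\ee}}E_\ii\leq E$. Setting $J=\{1,\ldots,r\}\setminus\{i_1,\ldots,i_l\}$, we have $E_\ii=\langle e_j\mid j\in J\rangle$. For each $j\in J$, the inclusion $\langle e_j\rangle\leq E_\ii$ together with the fact that conjugation by $\cc^{\ee}$ is an automorphism of $G$ gives
\[
\ls{\cc^{\ee}}\langle e_j\rangle\leq\ls{\cc^{\ee}}E_\ii\leq E.
\]
The hypothesis, applied to the index $j$ and the same $\ee$, then yields $\ls{\cc^{\ee}}\langle e_j\rangle=\langle e_j\rangle$, i.e.\ $\cc^{\ee}$ normalises each such line. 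Since $E$ is elementary abelian,
\[
\ls{\cc^{\ee}}E_\ii=\langle \ls{\cc^{\ee}}e_j\mid j\in J\rangle\leq\langle e_j\mid j\in J\rangle=E_\ii,
\]
and comparing orders forces $\ls{\cc^{\ee}}E_\ii=E_\ii$, as desired. The degenerate tuples $\ii\in S^r_0\cup S^r_r$ are covered by the vacuous cases recorded after Definition~\ref{def:failthfulcollection}.

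The only real point to watch is that the global containment $\ls{\cc^{\ee}}E_\ii\leq E$ actually descends to each constituent line $\langle e_j\rangle$ with $j\in J$; this rests solely on the order-preservation of $\cc^{\ee}$-conjugation on subgroups. Once each such line is fixed by $\cc^{\ee}$, reassembling them back into $E_\ii$ uses only that $E_\ii$ is generated by the $e_j$ with $j\in J$ inside the abelian group $E$. I therefore expect no serious difficulty: the content of the lemma is precisely the reduction from the subspaces $E_\ii$ of arbitrary dimension to the lines $\langle e_i\rangle$, and the argument above carries it out directly.
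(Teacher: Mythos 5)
Your proof is correct and follows essentially the same route as the paper's: the forward direction via the identification $\langle e_i\rangle=E_{[1,\ldots,\hat i,\ldots,r]}$, and the backward direction by descending the containment $\ls{\cc^{\ee}}E_\ii\leq E$ to each generating line $\langle e_j\rangle$ and reassembling. The only cosmetic difference is that you conclude with an order comparison where the paper directly writes the chain of equalities of generated subgroups.
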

\begin{proof}
That a faithful collection satisfies the given conditions is clear by
considering the $r-1$-tuples $[1,\ldots,\hat i,\ldots,r]$.
Conversely, let $\ii\in S_{r-l}^r$ for some $0\leq l\leq r$,
and let $\ee=(\epsilon_1,\dots,\epsilon_r)\in\{-1,0,1\}^r$ such that 
\[
\ls{\cc^{\ee}}E_\ii\leq E,
\]
where $E_\ii=\langle e_{j_1},\ldots,e_{j_l}\rangle$ has rank $l$.
In particular, we have 
\[
\ls{\cc^{\ee}}\langle e_{j_k} \rangle \leq E
\]
for $1\leq k\leq l$, and so, by hypothesis, $\ls{\cc^{\ee}}\langle
e_{j_k} \rangle=\langle e_{j_k}\rangle$ for $1\leq k\leq l$.
It follows that $\ls{\cc^{\ee}}E_\ii=
\ls{\cc^{\ee}}\langle e_{j_1},\ldots,e_{j_l}\rangle=\langle
\ls{\cc^{\ee}}e_{j_1},\ldots,\ls{\cc^{\ee}}e_{j_l}\rangle=
\langle e_{j_1},\ldots,e_{j_l}\rangle=E_\ii$. 
\end{proof}

The next result gives sufficient conditions for the existence of a
faithful collection. 

\begin{lem}\label{lem:sufficientforfaithfulcollection}
Let $E=\langle e_1,\dots,e_r\rangle$ be an elementary abelian
$p$-subgroup of $G$ of rank $r$ and let $\cc=(c_1,\ldots, c_r)\in G^r$.
Suppose $c_i\in C_G(E_i)\setminus C_G(e_i)$ and
$[c_i,c_j]=1$ for all $i,j$. Then the collection $\{E_i,c_j\}$ is faithful.
\end{lem}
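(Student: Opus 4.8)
The plan is to verify the criterion of Lemma~\ref{lem:characterizationfaithfulcollection}: I will show that for every $\ee=(\epsilon_1,\dots,\epsilon_r)\in\{-1,0,1\}^r$ and every $1\leq i\leq r$, the containment $\ls{\cc^{\ee}}\langle e_i\rangle\leq E$ forces $\ls{\cc^{\ee}}\langle e_i\rangle=\langle e_i\rangle$. First I would simplify the conjugate $\ls{\cc^{\ee}}e_i$. For each $j\neq i$ we have $e_i\in E_j$, so the hypothesis $c_j\in C_G(E_j)$ gives $\ls{c_j^{\epsilon_j}}e_i=e_i$. Since all the $c_j$ commute, I may reorder $\cc^{\ee}=c_1^{\epsilon_1}\cdots c_r^{\epsilon_r}$ as $c_i^{\epsilon_i}\prod_{j\neq i}c_j^{\epsilon_j}$, and the right-hand factor fixes $e_i$; hence
\[
\ls{\cc^{\ee}}e_i=\ls{c_i^{\epsilon_i}}e_i.
\]
If $\epsilon_i=0$ this equals $e_i$ and there is nothing to prove, so I may assume $\epsilon_i=\pm1$, write $f=\ls{c_i^{\epsilon_i}}e_i$, and suppose $f\in E$.

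The key step is to pin down $f$ by using the commuting relations a second time. Since $c_i^{\epsilon_i}$ commutes with each $c_j$ and (for $j\neq i$) $c_j$ fixes $e_i$, I obtain
\[
\ls{c_j}f=\ls{c_j c_i^{\epsilon_i}}e_i=\ls{c_i^{\epsilon_i}c_j}e_i=\ls{c_i^{\epsilon_i}}e_i=f,
\]
so $f$ is centralized by every $c_j$ with $j\neq i$. Now expand $f=\prod_{k=1}^r e_k^{a_k}$ in the basis. Because $c_j$ fixes each $e_k$ with $k\neq j$, computing $\ls{c_j}f=\prod_{k}(\ls{c_j}e_k)^{a_k}$ and comparing with $f$ yields $(\ls{c_j}e_j)^{a_j}=e_j^{a_j}$, that is, $c_j$ centralizes $e_j^{a_j}$. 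Were $a_j\neq0$ in $\FF_p$, then $e_j^{a_j}$ would generate $\langle e_j\rangle$, and $c_j$ would centralize $e_j$, contradicting $c_j\notin C_G(e_j)$. Hence $a_j=0$ for all $j\neq i$, so $f=e_i^{a_i}\in\langle e_i\rangle$; as $f$ is a conjugate of the order-$p$ element $e_i$ it is nontrivial, whence $\ls{\cc^{\ee}}\langle e_i\rangle=\langle f\rangle=\langle e_i\rangle$, as required.

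The argument is elementary, and the one point that genuinely requires the commutativity hypothesis a second time is the passage showing $f$ is fixed by the remaining $c_j$. This is the real obstacle: once $f\in E$, one checks that $c_i$ normalizes $E$ and fixes the hyperplane $E_i$ pointwise, but that alone does not pin $\langle e_i\rangle$ down, since an automorphism fixing $E_i$ pointwise can move $\langle e_i\rangle$ off itself (as transvections do). It is precisely the \emph{joint} fixing of $f$ by all the $c_j$ with $j\neq i$, combined with $c_j\notin C_G(e_j)$, that forces every off-diagonal coordinate $a_j$ of $f$ to vanish.
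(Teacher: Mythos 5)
Your proof is correct and takes essentially the same route as the paper's: both reduce via Lemma~\ref{lem:characterizationfaithfulcollection} to analysing $f=\ls{c_i^{\epsilon_i}}e_i$, both use the commutativity of the $c_j$ to show that each $c_j$ with $j\neq i$ fixes $f$, and both derive a contradiction with $c_j\notin C_G(e_j)$ if an off-diagonal coordinate of $f$ is nonzero. The only cosmetic difference is that the paper phrases the final contradiction as $c_{i_1}$ centralizing $E=\langle E_{i_1},f\rangle$, whereas you compare coordinates to get $(\ls{c_j}e_j)^{a_j}=e_j^{a_j}$ directly.
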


\begin{proof}
We use Lemma \ref{lem:characterizationfaithfulcollection}. Consider
 $\ee=(\epsilon_1,\ldots,\epsilon_r)$ with $\epsilon_i\in
\{-1,0,1\}$ and let $1\leq i_0\leq r$ such that 
\[
\ls{\cc^{\ee}}\langle e_{i_0}\rangle\leq E.
\]
Then we have
\[
\ls{\cc^{\ee}}e_{i_0}=
\ls{c_1^{\epsilon_1}\ldots c_r^{\epsilon_r}}e_{i_0}=
\ls{c_{i_0}^{\epsilon_{i_0}}}e_{i_0}=\lambda_1
e_1+\ldots +\lambda_{i_0} e_{i_0}+\ldots +\lambda_r e_r, 
\]
where $\lambda_i\in \FF_p$ (not all zero), and in the second equality
we have used that $c_i\in C_G(E_i)$, that $[c_i,c_{i_0}]=1$
and that $e_{i_0}\in E_i$ for $i\neq i_0$.
Assume $\lambda_{i_1}\neq 0$ for some $i_1\neq i_0$. Then,
\[
E=\langle E_{i_1},\ls{c_{i_0}^{\epsilon_{i_0}}}e_{i_0}\rangle .
\]
Now, by hypothesis $c_{i_1}$ centralizes $e_{i_0}\in E_{i_1}$, and we
calculate 
\[
\ls{c_{i_1}c_{i_0}^{\epsilon_{i_0}}}e_{i_0}=
\ls{c_{i_0}^{\epsilon_{i_0}}c_{i_1}}e_{i_0}=\ls{c_{i_0}^{\epsilon_{i_0}}}e_{i_0}, 
\]
using that $[c_{i_1},c_{i_0}]=1$, that $c_{i_1}\in
C_G(E_{i_1})$ and that $e_{i_0}\in E_{i_1}$ (as $i_0\neq i_1$). This is
a contradiction with $c_{i_1}\notin C_G(E)$, and hence $\lambda_i=0$ for
all $i\neq i_0$ and $\ls{\cc^{\ee}} e_{i_0}=\lambda_{i_0}
e_{i_0}\in\langle e_{i_0}\rangle$. 
\end{proof}

In the next section, we will use the existence of faithful collections
with $E$ of rank $r=\rk_p(G)$ subject to certain constraints to prove
that $\Q\D_p$ holds for $G$, see Theorem
\ref{theorem:generalizationcentralizernormalizer}. 

By constrast, we prove that, under certain assumptions on $G$, there
cannot be a faithful collection subject to the conditions in Lemma
\ref{lem:sufficientforfaithfulcollection}.
Recall that a {\em maximal elementary abelian $p$-subgroup of $G$} is
a maximal element in the poset $\A_p(G)$, i.e., an elementary
abelian $p$-subgroup $E$ of $G$ which is not properly contained in any
other elementary abelian $p$-subgroup of $G$. But $E$ need not have
maximal rank $\rk_p(G)$. 

\begin{lem}\label{lem:pstable}
Suppose that $O_p(G)\cap Z(G)>1$ for some odd prime
$p$. Let $E$ be a maximal elementary abelian $p$-subgroup of $G$, not
necessarily of maximal order. 
Then there exists no collection $\{E_i,c_j\}$
for $E$ subject to $c_i\in C_G(E_i)\setminus C_G(e_i)$
for all $i$. 
\end{lem}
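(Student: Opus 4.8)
The plan is to combine the hypothesis $O_p(G)\cap Z(G)>1$ with the maximality of $E$ to produce a central element sitting inside $E$ on which no legitimate $c_{i_0}$ can act. Since $O_p(G)\cap Z(G)$ is a nontrivial finite $p$-group, it contains an element $z$ of order $p$, and $z$ is central in $G$. My first step would be to show that necessarily $z\in E$. Indeed, because $z$ is central, $\langle E,z\rangle$ is abelian of exponent $p$, so it lies in $\A_p(G)$; by maximality of $E$ in this poset we get $\langle E,z\rangle=E$, that is, $z\in E$. This is the only place where maximality is used, and maximality in $\A_p(G)$ (rather than maximality of the rank) is exactly what is needed here, which matches the way the lemma is phrased.

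Next I would assume, for a contradiction, that a collection $\{E_i,c_j\}$ with $c_i\in C_G(E_i)\setminus C_G(e_i)$ for all $i$ does exist. Writing $z=\sum_{j=1}^r\mu_j e_j$ with $\mu_j\in\FF_p$, the fact that $z\neq 0$ forces $\mu_{i_0}\neq 0$ for some index $i_0$. Put $w=\sum_{j\neq i_0}\mu_j e_j$, so that $w\in E_{i_0}$ and $z=\mu_{i_0}e_{i_0}+w$. Now $c_{i_0}$ centralizes $z$ (as $z\in Z(G)$) and centralizes $w$ (as $c_{i_0}\in C_G(E_{i_0})$ and $w\in E_{i_0}$). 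Since conjugation by $c_{i_0}$ is a group automorphism and $E$ is abelian, $c_{i_0}$ then centralizes $\mu_{i_0}e_{i_0}=z-w$. Finally, $\mu_{i_0}$ is invertible modulo $p$, so $\langle \mu_{i_0}e_{i_0}\rangle=\langle e_{i_0}\rangle$, whence $c_{i_0}\in C_G(e_{i_0})$, contradicting $c_{i_0}\notin C_G(e_{i_0})$. Therefore no such collection exists.

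The argument is short, and its conceptual core is the observation that maximality forces the central element $z$ into $E$; once $z\in E$ has a nonzero coordinate $\mu_{i_0}$, centrality of $z$ clashes with the requirement $c_{i_0}\notin C_G(e_{i_0})$. The one point I expect to require a little care is the passage from the statement that $c_{i_0}$ centralizes both $z$ and $w$ to the conclusion that it centralizes $\mu_{i_0}e_{i_0}$: the element $c_{i_0}$ need not normalize all of $E$, but it does fix the two specific elements $z$ and $w$ of $G$, so it fixes their quotient $z w^{-1}=\mu_{i_0}e_{i_0}$ regardless of how it acts elsewhere on $E$. This bookkeeping between the additive notation used for $E$ and the multiplicative conjugation action is the only genuine subtlety; everything else is a direct consequence of the hypotheses.
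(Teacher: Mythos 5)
Your proof is correct and follows essentially the same route as the paper's: both arguments force a nontrivial central element of order $p$ into $E$ via maximality, write it as $\mu_{i_0}e_{i_0}$ plus an element of $E_{i_0}$, and derive $c_{i_0}\in C_G(e_{i_0})$ from the centrality of that element together with $c_{i_0}\in C_G(E_{i_0})$. The only cosmetic difference is that the paper normalises the nonzero coefficient to $1$ by relabelling, whereas you keep $\mu_{i_0}$ general and invoke its invertibility modulo $p$ at the end.
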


\begin{proof}
Let $E=\langle e_1,\dots,e_r\rangle\in\A_p(P)$ be a maximal element,
and suppose that $\{E_i,c_j\}$ is a collection subject to
$c_i\in C_G(E_i)\setminus C_G(e_i)$ for all $i$.
Let $V=\Omega_1\big(O_p(G)\cap Z(G)\big)$. Note that $E=VE\geq V$ by
maximality of $E$, and the fact that $V$ lies in the center of every
Sylow $p$-subgroup of $G$. 

Let $1\neq v=\sum_{i=1}^r\lambda_ie_i\in V$, and without loss, suppose
$\lambda_1=1$. (Here we use the additive notation of $E$ seen as
vector space.)
So $v=e_1+v'$ with $v'\in E_1$. Because $v\in V\leq Z(G)$ and
$c_1\in C_G(E_1)$, we have
$v=\ls{c_1}v=\ls{c_1}(e_1+v')=(\ls{c_1}e_1)+v'$. Therefore
$e_1=v-v'=\ls{c_1}e_1$, saying that $c_1\in C_G(e_1)$, a
contradiction. 
\end{proof}

%%%%%%%%%%%%%%%%%%%%%%%%%%%%

\section{Generalization}\label{section:generalization}

We want to find sufficient conditions which imply that $G$ has the
Quillen dimension at $p$ property, or more generally, which imply that
Quillen's conjecture holds for $G$. Theorem
\ref{theorem:generalizationcentralizernormalizer} below is a generalization of
\cite[Theorem 5.3]{Diaz2016}. 

\begin{defn}\label{def:sequenceandsignature}
For an $l$-tuple $\ii=[i_1,\ldots,i_l]\in S^r_l$, we define
the {\em signature}
\[
\sgn(\ii)=(-1)^{n+m}\qbox{of $\ii$, where}
\]
\begin{itemize}\item $n$ is the number of transpositions
we need to apply to the $l$-tuple $\ii$ to rearrange it in increasing order
$[j_1,\ldots,j_l]$, and\item $m$ is the number of positions in which
$[j_1,\ldots,j_l]$ differ from $[1,\ldots,l]$.\end{itemize}
\end{defn}
Note that in Definition~\ref{def:sequenceandsignature}, the number $n$
of transpositions is not uniquely defined, but its parity is.

For instance, if $\ii=[1,4,2]$, then $n=1$, since we need to apply $(2,4)$
to reorder $\ii$ as $[1,2,4]$, and $m=1$, since $[1,2,4]$ differs from
$[1,2,3]$ only in one place. Thus $\sgn(\ii)=1$.

Let $E=\langle e_1,\dots,e_r\rangle$ be an elementary
abelian $p$-subgroup of rank $r$ of the group $G$. We now generalize the chains introduced in
\cite[Section 3]{Diaz2016} to the case when $E$ need not have
maximal rank $\rk_p(G)$ and $G$ need not be a semi-direct product. We consider the poset $\A_p(E)$ and its order complex $\Delta(\A_p(E))$. We define an element of the integral simplicial chains of dimension $r-1$, $C_{r-1}(\Delta(\A_p(E)))$.
For $\ii=[i_1,\ldots,i_{r-1}]\in S^r_{r-1}$, we define the
$(r-1)$-simplex 
$$\sigma_{\ii}=\big(E_{[i_1,\ldots,i_{r-1}]}<E_{[i_1,\dots,i_{r-2}]}<
\dots<E_{i_1}<E\big)\in\Delta(\A_p(E)),$$
and for $a\in \ZZ$, the chain
$$Z_{E,a}=a\sum_{\ii\in S^r_{r-1}}\sgn(\ii)\sigma_{\ii}
\qbox{in}C_{r-1}(\Delta(\A_p(E))).$$
By definition of the differential,
$$d(Z_{E,a})=a\sum_{0\leq j<r}\sum_{\ii\in S^r_{r-1}}(-1)^j\sgn(\ii)d_j(\sigma_{\ii})
\qbox{where}$$
$$d_j(\sigma_{\ii})=\big(E_{[i_1,\ldots,i_{r-1}]}<\dots<E_{[i_1,\dots,i_{r-j}]}<
E_{[i_1,\dots,i_{r-j-2}]}<\dots<E_{i_1}<E\big)$$
removes the $(j+1)$-st term from the left in the chain for $0\leq j<r$. So, in particular,
$$d_0(\sigma_{\ii})=\big(E_{[i_1,\dots,i_{r-2}]}<\dots<E\big)
\qbox{and}
d_{r-1}(\sigma_{\ii})=\big(E_{[i_1,\ldots,i_{r-1}]}<\dots<E_{i_1}\big).$$
Hence, define the $(r-2)$-simplex
$$\tau_{\ii}=d_{r-1}(\sigma_{\ii})=\big(E_{[i_1,\ldots,i_{r-1}]}<\dots<E_{i_1}\big).$$
Let us recall the following useful property
(\cite[Proposition 3.2]{Diaz2016}).
\begin{prop}\label{prop:dz}
With the above notation,
\[
d(Z_{E,a})=(-1)^{r-1}a\sum_{\ii\in S^r_{r-1}}\sgn(\ii)\tau_{\ii}.
\]
\end{prop}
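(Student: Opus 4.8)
The plan is to expand $d(Z_{E,a})$ face by face and show that every face operator $d_j$ with $0\le j\le r-2$ contributes nothing to the total sum, so that only the top face $d_{r-1}=\tau_\ii$ survives with the asserted sign. Writing $d(Z_{E,a})=a\sum_{0\le j<r}\sum_\ii(-1)^j\sgn(\ii)\,d_j(\sigma_\ii)$, it suffices to prove that for each fixed $j$ with $0\le j\le r-2$ the inner sum $\sum_{\ii\in S^r_{r-1}}\sgn(\ii)\,d_j(\sigma_\ii)$ vanishes; the leftover $j=r-1$ term is then exactly $(-1)^{r-1}a\sum_\ii\sgn(\ii)\tau_\ii$, as claimed.

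The key tool I would set up first is a reformulation of the signature as a genuine permutation sign. Given $\ii=[i_1,\dots,i_{r-1}]\in S^r_{r-1}$, let $i_0$ be the unique index it omits and let $\rho_\ii=[i_1,\dots,i_{r-1},i_0]$ be the permutation of $\{1,\dots,r\}$ obtained by appending $i_0$. A short computation with Definition~\ref{def:sequenceandsignature} shows $\sgn(\ii)=\sgn(\rho_\ii)$: the factor $(-1)^n$ is the sign of the permutation sorting the first $r-1$ entries, while $(-1)^m=(-1)^{r-i_0}$ is precisely the sign of moving the appended $i_0$ into its sorted position. This is the device that makes all the cancellations transparent and, crucially, treats the bottom face on the same footing as the rest.

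Next, for each fixed $j$ with $0\le j\le r-2$ I would define a fixed-point-free, sign-reversing involution on $S^r_{r-1}$ by swapping the entries in positions $r-j-1$ and $r-j$ of the extended permutation $\rho_\ii$. For $1\le j\le r-2$ this swaps the two genuine entries $i_{r-j-1}\leftrightarrow i_{r-j}$ of $\ii$; for $j=0$ it swaps the last entry $i_{r-1}$ with the omitted index $i_0$, producing $\ii'=[i_1,\dots,i_{r-2},i_0]$. In either case $\rho_{\ii'}$ differs from $\rho_\ii$ by a single transposition, so $\sgn(\ii')=-\sgn(\ii)$. The geometric point, which I would verify from Remark~\ref{remark:equalsequencesubspaces} and Definition~\ref{def:sequencesubspace}, is that this swap alters the chain $\sigma_\ii$ only in its $j$-th vertex $E_{[i_1,\dots,i_{r-1-j}]}$ — the two indexed subspaces lying in the rank-two gap between $E_{[i_1,\dots,i_{r-j}]}$ and $E_{[i_1,\dots,i_{r-j-2}]}$ — and that vertex is exactly the one deleted by $d_j$. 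Hence $d_j(\sigma_\ii)=d_j(\sigma_{\ii'})$ while the coefficients $(-1)^j\sgn(\ii)$ and $(-1)^j\sgn(\ii')$ are opposite, so the two terms cancel; summing over the involution annihilates the whole inner sum for that $j$.

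I expect the main obstacle to be the bookkeeping at the bottom face $j=0$, where the pairing changes the omitted index and hence the underlying set $\{i_1,\dots,i_{r-1}\}$ itself changes; there the naive ``same set, one transposition, opposite sign'' reasoning fails, and one genuinely needs the reformulation $\sgn(\ii)=\sgn(\rho_\ii)$ to see that swapping $i_{r-1}$ with $i_0$ still reverses the sign. Once that identity is in place, $j=0$ becomes merely the case of transposing the last two letters of $\rho_\ii$, perfectly parallel to the interior faces, and the proof closes. A minor point to check along the way is that each $d_j(\sigma_\ii)$ remains a strictly increasing chain, so that these are honest simplices in $\Delta(\A_p(E))$.
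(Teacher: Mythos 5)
Your proof is correct, and it reaches the same pairwise cancellations as the paper but organizes them differently. The paper analyzes when two faces $d_k(\sigma_{\ii})=d_l(\sigma_{\jj})$ can coincide, first arguing that this forces $k=l$, then pairing tuples that differ by a transposition in positions $r-k-1,r-k$ for interior faces, and for $k=0$ invoking an external result (Lemma 2.4 of the cited paper of the first author) to get $\sgn(\ii)=-\sgn(\jj)$ when $\ii$ and $\jj$ agree except in the last entry. You instead prove the stronger statement that for each fixed $j\leq r-2$ the inner sum $\sum_{\ii\in S^r_{r-1}}\sgn(\ii)\,d_j(\sigma_{\ii})$ already vanishes, via an explicit fixed-point-free sign-reversing involution; this sidesteps the need to argue that coincident faces must have $k=l$. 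Your key device, the identity $\sgn(\ii)=\sgn(\rho_{\ii})$ for the extended permutation $\rho_{\ii}=[i_1,\dots,i_{r-1},i_0]$ (which is correct: $(-1)^n$ sorts the first $r-1$ letters and $(-1)^m=(-1)^{r-i_0}$ accounts for sliding $i_0$ into place), replaces the external citation and puts the $j=0$ face on exactly the same footing as the interior ones, since swapping $i_{r-1}$ with the omitted index $i_0$ is just a transposition of the last two letters of $\rho_{\ii}$. The verification that each swap alters only the vertex $E_{[i_1,\dots,i_{r-j-1}]}$ deleted by $d_j$ is accurate by Remark~\ref{remark:equalsequencesubspaces}. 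What your route buys is a self-contained and more uniform argument; what the paper's route buys is brevity by leaning on the earlier reference. Either is a complete proof.
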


\begin{proof}  
Suppose that $d_k(\sigma_{\ii})=d_l(\sigma_{\jj})$ for some
$\ii,\jj\in S^r_{r-1}$ and $0\leq k,l\leq r-1$, that is,
\begin{multline*}
\big(E_{[i_1,\ldots,i_{r-1}]}<\dots<E_{[i_1,\dots,i_{r-k}]}<
E_{[i_1,\dots,i_{r-k-2}]}<\dots<E_{i_1}<E\big)=\\
=\big(E_{[j_1,\ldots,j_{r-1}]}<\dots<E_{[j_1,\dots,i_{r-l}]}<
E_{[j_1,\dots,j_{r-l-2}]}<\dots<E_{j_1}<E\big).
\end{multline*}
For both chains to be equal, the jump by an index $p^2$ must occur in
the same place, saying that $k=l$. If $0<k=l<r-1$, then, by Remark
\ref{remark:equalsequencesubspaces}, the tuples $\ii$ and $\jj$ are
identical but for $\{ i_{r-k-1}, i_{r-k} \}=\{ j_{r-k-1}, j_{r-k}\}$.
So either $\ii=\jj$, or
$\ii$ and $\jj$ differ by one transposition and hence
$\sgn(\ii)=-\sgn(\jj)$. In the latter case, the
corresponding summands $\sgn(\ii)d_k(\sigma_\ii)$ and $\sgn(\jj)
d_l(\sigma_\jj)$ add up to zero. Assume now that $k=l=0$. Then, again
by Remark \ref{remark:equalsequencesubspaces},
$[j_1,\dots,j_{r-2}]=[i_1,\dots,i_{r-2}]$ and either $\ii=\jj$ or
$j_{r-1}\neq i_{r-1}$. In the latter case, by \cite[Lemma
  2.4]{Diaz2016}, $\sgn(\ii)=-\sgn(\jj)$ and again the two
terms cancel each other out. Finally, if $k=l=r-1$, the tuples
$\ii$ and $\jj$ are identical and the terms contribute to the sum in
the statement of the lemma. 
\end{proof}

\smallskip  
Consider the order complex $\Delta(\A_p(G))$. Then the group $G$ acts by conjugation on $C_*(\Delta(\A_p(G)))$.
For $x\in G$, the element
\[
\ls xZ_{E,a}=a\sum_{\ii\in S^r_{r-1}}\sgn(\ii)
\ls x{\sigma_{\ii}}=
a\sum_{\ii\in S^r_{r-1}}\sgn(\ii)
\big(\ls xE_{[i_1,\ldots,i_{r-1}]}<\dots<\ls xE_{i_1}<\ls xE\big)
\]
belongs to $\Delta(\A_p(\ls xE))\subseteq \Delta(\A_p(G))$. 

Let $J$ be a non-empty finite indexing set and consider subsets $\xx=\{x_j\}_{j\in J}\subseteq G$ and 
$\aa=\{a_j\}_{j\in J}\subseteq \ZZ$. Define the chain in $C_{r-1}(\Delta(\A_p(G)))$:
\begin{equation}\label{eq:z}
Z_{G,\xx,\aa}=\sum_{j\in J}\ls{x_j}Z_{E,a_j}=
\sum_{j\in J}a_j\sum_{\ii\in S^r_{r-1}}
\sgn(\ii)\ls{x_j}{\sigma_{\ii}}.
\end{equation}
Then, by Proposition \ref{prop:dz},
\[
d(Z_{G,\xx,\aa})=(-1)^{r-1}\sum_{j\in J}a_j\sum_{\ii\in S^r_{r-1}}
\sgn(\ii)\ls{x_j}{\tau_{\ii}}.
\]
So, given $j\in J$ and $\ii\in S^r_{r-1}$, the coefficients of
$\ls{x_j}{\sigma_{\ii}}$ and $\ls{x_j}{\tau_{\ii}}$ are respectively:
\begin{align}
C_{j,\ii}&:=\sum_{(l,\kk)\in\C(j,\ii)}a_l\sgn(\kk)
\qbox{and}\label{equ:coefficientforZgeneral}\\ 
D_{j,\ii}&:=(-1)^{r-1}\sum_{(l,\kk)\in\D(j,\ii)}a_l\sgn(\kk),
\label{equ:coefficientfordZgeneral}
\end{align}
where
\begin{align*}
\C(j,\ii)&=\{(l,\kk)\in J\times S^r_{r-1}~|~\ls{x_l}E_{[k_1,\dots,k_t]}=
\ls{x_j}E_{[i_1,\dots,i_t]}~,~\forall~0\leq t<r\}\qbox{and}
\label{equ:coefficientforZgeneral}\\ 
\D(j,\ii)&=\{(l,\kk)\in J\times S^r_{r-1}~|~\ls{x_l}E_{[k_1,\dots,k_t]}=
\ls{x_j}E_{[i_1,\dots,i_t]}~,~\forall~1\leq t<r\}.
\end{align*}
Note that $\C(j,\ii)\subseteq\D(j,\ii)$ (and recall that
$E_\emptyset=E$).

If we further assume that $E$ is a \emph{maximal} elementary abelian
$p$-subgroup of $G$, we want to find sufficient conditions for the
existence of a non-zero cycle in $\widetilde H_{r-1}(|\A_p(G)|)$.

\begin{thm}\label{thm:noncontractiblemoregeneral}
Let $E=\langle e_1,\dots,e_r\rangle$ be a \emph{maximal} elementary
abelian $p$-subgroup of rank $r$ of the group $G$. If the subsets $\xx=\{x_j\}_{j\in J}\subseteq G$
and $\aa=\{a_j\}_{j\in J}\subseteq \ZZ$ satisfy that 
$C_{j,\ii}\neq 0$ for some $j\in J$ and some
$\ii\in S^r_{r-1}$  and that
 $D_{j,\ii}=0$ for all  $j\in J$ and all
$\ii\in S^r_{r-1}$, then
\[
0\neq [Z_{G,\xx,\aa}]\in \widetilde H_{r-1}(|\A_p(G)|).
\]
In particular, $|\A_p(G)|$ is not contractible and
Quillen's conjecture holds for $G$.
If furthermore $r=\rk_p(G)$ then $\Q\D_p$ holds for $G$. 
\end{thm}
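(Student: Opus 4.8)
The plan is to produce an explicit nonzero homology class in $\widetilde H_{r-1}(|\A_p(G)|)$ represented by the chain $Z_{G,\xx,\aa}$, so two things must be verified: that $Z_{G,\xx,\aa}$ is a cycle, and that it is not a boundary. The cycle condition is the easier half. By the formula $d(Z_{G,\xx,\aa})=(-1)^{r-1}\sum_{j\in J}a_j\sum_{\ii\in S^r_{r-1}}\sgn(\ii)\ls{x_j}{\tau_{\ii}}$ together with the computation of coefficients, the coefficient of each simplex $\ls{x_j}{\tau_{\ii}}$ appearing in $d(Z_{G,\xx,\aa})$ is exactly $D_{j,\ii}$ (after grouping together all pairs $(l,\kk)$ that produce the same conjugated chain, which is precisely what the indexing set $\D(j,\ii)$ records). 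The hypothesis $D_{j,\ii}=0$ for all $j$ and all $\ii$ therefore forces $d(Z_{G,\xx,\aa})=0$, so $Z_{G,\xx,\aa}$ is a genuine $(r-1)$-cycle.

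For the non-boundary part, I would exploit the \emph{maximality} of $E$, which is the one hypothesis not yet used. The key geometric fact is that when $E$ is a maximal elementary abelian $p$-subgroup, the top simplex $\sigma_{\ii}$ (whose largest term is $E=E_\emptyset$) is a maximal simplex in $\Delta(\A_p(G))$: no $E'\in\A_p(G)$ strictly contains $E$, so no $(r)$-simplex of the order complex can have $\sigma_{\ii}$ as a face obtained by deleting a top vertex. Consequently, in the boundary map $d\colon C_r\to C_{r-1}$, the simplices $\ls{x_j}\sigma_{\ii}$ that carry the nonzero coefficients $C_{j,\ii}$ cannot appear in $d$ of any $r$-chain in a way that touches their top term $\ls{x_j}E$. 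I would make this precise by showing that $Z_{G,\xx,\aa}$ has nonzero coefficient on some simplex $\ls{x_j}\sigma_{\ii}$ (guaranteed by the hypothesis $C_{j,\ii}\neq 0$), and that this coefficient is an invariant detecting a nonzero homology class — concretely, by identifying a suitable quotient or summand of $C_{r-1}$ on which the image of $d\colon C_r\to C_{r-1}$ vanishes but the image of $Z_{G,\xx,\aa}$ does not.

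The cleanest way to organize this last point is to pass to the chain complex and argue that $Z_{G,\xx,\aa}$ projects nontrivially to $\widetilde H_{r-1}$. Since $E$ is maximal, the chains $\sigma_{\ii}$ (and their $G$-translates) are top-dimensional simplices whose leading vertex cannot be the face-deletion image of a higher simplex; one packages the $C_{j,\ii}$ as the coefficients of $[Z_{G,\xx,\aa}]$ in a basis of $C_{r-1}$ built from such maximal simplices, and shows any boundary from $C_r$ contributes zero to those basis coefficients. This reduces the claim that $[Z_{G,\xx,\aa}]\neq 0$ to the single arithmetic statement $C_{j,\ii}\neq 0$, which is the surviving hypothesis. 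The last two sentences of the statement are then immediate: a space with nonzero reduced homology in any degree is not contractible, so Quillen's conjecture holds for $G$; and when $r=\rk_p(G)$, the nonvanishing occurs in the top possible degree $r-1$, which is by Definition~\ref{def:QDp} exactly the assertion that $\Q\D_p$ holds for $G$.

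The step I expect to be the main obstacle is the non-boundary argument — precisely, turning ``$E$ is maximal'' into a rigorous demonstration that no element of $\im(d\colon C_r\to C_{r-1})$ can cancel the coefficient $C_{j,\ii}$ on $\ls{x_j}\sigma_{\ii}$. The subtlety is that a maximal simplex $\ls{x_j}\sigma_{\ii}$ might still arise as a face of higher simplices through \emph{interior} face deletions rather than deletion of the top vertex, so one must verify that the relevant coefficient $C_{j,\ii}$ is genuinely unaffected; this is where the careful bookkeeping of $\C(j,\ii)$ versus $\D(j,\ii)$, and the maximality ensuring $\ls{x_j}E$ is a top vertex admitting no extension, must be combined. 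Everything else is either the already-established formula of Proposition~\ref{prop:dz} or formal consequences (contractibility and the definition of $\Q\D_p$).
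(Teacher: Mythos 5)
Your proposal is correct and follows essentially the same route as the paper: the hypothesis $D_{j,\ii}=0$ gives $d(Z_{G,\xx,\aa})=0$ via Proposition~\ref{prop:dz}, some $C_{j,\ii}\neq 0$ gives $Z_{G,\xx,\aa}\neq 0$, and maximality of $E$ rules out $Z_{G,\xx,\aa}$ being a boundary. The one step you flag as an obstacle (interior face deletions) resolves immediately: each $\ls{x_j}{\sigma_{\ii}}$ is a saturated chain whose members have consecutive ranks $1,2,\dots,r$ and whose top term $\ls{x_j}E$ is maximal in $\A_p(G)$, so no element of the poset can be inserted below, between, or above its terms; hence it is a facet of the order complex, is a face of no $r$-simplex whatsoever, and its nonzero coefficient $C_{j,\ii}$ survives modulo $\im(d\colon C_r\to C_{r-1})$.
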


\begin{proof}
Consider the chain $Z_{G,\xx,\aa}\in C_{r-1}(\Delta(\A_p(G)))$
defined in Equation \eqref{eq:z}. The condition in the statement for
the coefficients $C_{j,\ii}$ is clearly equivalent to
$Z_{G,\xx,\aa}\neq0$. The condition on the coefficients $D_{j,\ii}$ is
equivalent to $d(Z_{G,\xx,\aa})=0$ (cf. also \cite[Proposition
  4.2]{Diaz2016}). By the maximality of $E$ , this cycle cannot be a
boundary and hence it gives rise to a non-zero homology class in
$\widetilde H_{r-1}(|\A_p(G)|)$. 
\end{proof}

The assumptions of Theorem~\ref{thm:noncontractiblemoregeneral} are
fulfilled when we can find a faithful collection subject to certain
constraints, as described in the next theorem. 

\begin{thm}\label{theorem:generalizationfaithfulcollection}
Let $E=\langle e_1,\dots,e_r\rangle$ be a maximal elementary abelian
$p$-subgroup of $G$ of rank $r$ and let $\{E_i,c_j\}$ be
a faithful collection such that $[c_i,c_j]=1$ for all $i,j$. Set
$J=\{\dd=(\delta_1,\ldots,\delta_r)\;,\;\delta_i\in\{0,1\}\}$, let
$\aa=\{a_{\dd}\}_{\dd\in J}\subseteq \ZZ$ and consider the following subset of
$G$: 
\[
\xx=\{\cc^{\dd}=c_1^{\delta_1}\cdots
c_r^{\delta_r}|\dd\in J\}.
\]
Then, for each such $\dd\in J$ and each $\ii=[i_1,\ldots,i_{r-1}]\in S^r_{r-1}$: 
\begin{enumerate}
\item \label{theorem:generalizationfaithfulcollectionCsimplified} 
$\displaystyle C_{\dd,\ii}=
\sgn(\ii)\big(\sum_{\dd'} a_{\dd'}\big)$, where the sum is over all
$\dd'\in J$ such that $\cc^{\dd-\dd'}\in N_G(E)$.
\item \label{theorem:generalizationfaithfulcollectionDsimplified} 
$\displaystyle D_{\dd,\ii}=(-1)^{r-1}\sgn(\ii)\big(\sum_{\dd'} a_{\dd'}\big)$,
where the sum is over all $\dd'\in J$ such that $\cc^{\dd-\dd'}\in N_G(E_{i_1})$.
\item \label{theorem:generalizationfaithfulcollectionCsimplifiedagain}
If $c_i\in C_G(E_i)\setminus N_G(\langle e_i\rangle)$ for all $1\leq i\leq r$, then
$\displaystyle
C_{\dd,\ii}=\sgn(\ii) a_{\dd}$.
\item \label{theorem:generalizationfaithfulcollectionDsimplifiedagain}
If $c_i\in N_G(E_i)$ for all $1\leq i\leq r$, then
$$D_{\dd,\ii}=(-1)^{r-1}\sgn(\ii)\big(\sum (a_{\dd'}+a_{\dd''})\big),$$
where the sum runs through  the pairs $\dd',\dd''\in J$ such that $\cc^{\dd-\dd'},\cc^{\dd-\dd''}\in N_G(E_{i_1})$, $\delta''_j=\delta'_j$ for all $j\neq i_1$ and $\delta'_{i_1}+~\delta''_{i_1}=1$.

\end{enumerate}
\end{thm}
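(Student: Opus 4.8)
The plan is to compute the two coefficient families $C_{\dd,\ii}$ and $D_{\dd,\ii}$ from their definitions (\ref{equ:coefficientforZgeneral}) and (\ref{equ:coefficientfordZgeneral}) by understanding the index sets $\C(\dd,\ii)$ and $\D(\dd,\ii)$ explicitly. The elements of $\xx$ are indexed by $J$, so a pair appearing in these sets has the form $(\dd',\kk)$ with $\dd'\in J$ and $\kk\in S^r_{r-1}$, and the defining condition compares the conjugates $\ls{\cc^{\dd'}}E_{[k_1,\dots,k_t]}$ with $\ls{\cc^{\dd}}E_{[i_1,\dots,i_t]}$. The first move is to rewrite each such equality as $\ls{\cc^{\dd-\dd'}}E_{[i_1,\dots,i_t]}=E_{[k_1,\dots,k_t]}$; since $\cc^{\dd-\dd'}=c_1^{\delta_1-\delta_1'}\cdots c_r^{\delta_r-\delta_r'}$ has exponents in $\{-1,0,1\}$, the faithfulness hypothesis of Lemma~\ref{lem:characterizationfaithfulcollection} applies: whenever such a conjugate of a subspace $E_{[i_1,\dots,i_t]}$ lands inside $E$ (and being equal to some $E_\kk$ it does), faithfulness forces it to equal $E_{[i_1,\dots,i_t]}$ itself. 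This identifies $\kk$ with $\ii$ and reduces the condition to the requirement that $\cc^{\dd-\dd'}$ normalize the relevant subspaces.

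For part~\eqref{theorem:generalizationfaithfulcollectionCsimplified}, the set $\C(\dd,\ii)$ demands the equalities for all $0\le t<r$, including $t=0$ where $E_\emptyset=E$; by the reduction above this pins $\kk=\ii$ and requires $\cc^{\dd-\dd'}$ to normalize the entire flag, the binding constraint being $\cc^{\dd-\dd'}\in N_G(E)$ (normalizing $E$ together with faithfulness yields normalization of every $E_{[i_1,\dots,i_t]}$). Thus $\C(\dd,\ii)=\{(\dd',\ii)\mid \cc^{\dd-\dd'}\in N_G(E)\}$, and summing $a_{\dd'}\sgn(\kk)=a_{\dd'}\sgn(\ii)$ over it gives the stated formula. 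Part~\eqref{theorem:generalizationfaithfulcollectionDsimplified} is identical except that $\D(\dd,\ii)$ drops the $t=0$ condition, so the weakest surviving constraint is normalization of $E_{i_1}$ (the top nontrivial subspace in the flag $\tau_\ii$); the overall sign $(-1)^{r-1}$ comes from the definition (\ref{equ:coefficientfordZgeneral}). For part~\eqref{theorem:generalizationfaithfulcollectionCsimplifiedagain}, I would specialize: under $c_i\in C_G(E_i)\setminus N_G(\langle e_i\rangle)$, I must show $\cc^{\dd-\dd'}\in N_G(E)$ forces $\dd'=\dd$. If some coordinate $\delta_i-\delta_i'\ne0$, then $c_i^{\pm1}$ appears and, using $c_i\in C_G(E_i)$ to compute the action on $E$, one sees $\ls{\cc^{\dd-\dd'}}\langle e_i\rangle\ne\langle e_i\rangle$ (since $c_i\notin N_G(\langle e_i\rangle)$), contradicting normalization of $E$ via the characterization in Lemma~\ref{lem:characterizationfaithfulcollection}; hence only $\dd'=\dd$ survives and $C_{\dd,\ii}=\sgn(\ii)a_{\dd}$.

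Part~\eqref{theorem:generalizationfaithfulcollectionDsimplifiedagain} is the delicate one and I expect it to be the main obstacle. Here the condition $c_i\in N_G(E_i)$ is weaker than centralizing, so I analyze exactly which $\dd'$ satisfy $\cc^{\dd-\dd'}\in N_G(E_{i_1})$. Writing $\gamma_j=\delta_j-\delta_j'$, I would argue that because each $c_i$ normalizes $E_i$ and the $c_i$ commute, the only coordinate that can fail to be forced to zero is $j=i_1$: for $j\ne i_1$ one has $e_j$-type generators inside $E_{i_1}$ on which the relevant $c_j$ acts, and normalization of $E_{i_1}$ together with faithfulness forces $\gamma_j=0$ there, whereas $c_{i_1}$ itself normalizes $E_{i_1}$ so $\gamma_{i_1}\in\{-1,0,1\}$ is unconstrained. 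Over the field $\FF_p$ the values $\delta_{i_1}'\in\{0,1\}$ pairing with $\dd$ then come in the two cases $\delta_{i_1}'=\delta_{i_1}$ and $\delta_{i_1}'=1-\delta_{i_1}$, which is precisely the pairing $\delta'_{i_1}+\delta''_{i_1}=1$ with all other coordinates equal described in the statement. Summing $a_{\dd'}\sgn(\ii)$ over these, and carrying the global $(-1)^{r-1}$, yields the claimed grouped sum $\sum(a_{\dd'}+a_{\dd''})$. The care needed is in verifying that normalizing $E_{i_1}$ genuinely rigidifies all coordinates except $i_1$ and does not over- or under-count the admissible $\dd'$; this bookkeeping, rather than any deep idea, is where the argument must be written out carefully.
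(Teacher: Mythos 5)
Your treatment of parts (1)--(3) is correct and follows essentially the same route as the paper: faithfulness pins $\kk=\ii$ and converts membership in $\C(\dd,\ii)$, resp.\ $\D(\dd,\ii)$, into the single condition $\cc^{\dd-\dd'}\in N_G(E)$, resp.\ $\cc^{\dd-\dd'}\in N_G(E_{i_1})$; and in part (3) the extra hypothesis $c_i\in C_G(E_i)\setminus N_G(\langle e_i\rangle)$ forces $\dd'=\dd$ exactly as in the paper's proof.

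Part (4), however, contains a genuine gap. You claim that $\cc^{\dd-\dd'}\in N_G(E_{i_1})$ forces $\delta'_j=\delta_j$ for every $j\neq i_1$, so that only the $i_1$-coordinate is free and the sum reduces to a single pair. This does not follow from the hypothesis of (4), which is only $c_i\in N_G(E_i)$: to rigidify the $j$-th coordinate you would need both $c_k\in C_G(E_k)$ for $k\neq j$ (to reduce $\ls{\cc^{\dd-\dd'}}{\langle e_j\rangle}$ to $\ls{c_j^{\delta_j-\delta'_j}}{\langle e_j\rangle}$) and $c_j\notin N_G(\langle e_j\rangle)$ (to conclude $\delta_j=\delta'_j$), and neither is assumed in part (4). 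For instance, if every $c_i$ happened to centralize $E$ (a legitimate instance of the hypotheses), then all $2^r$ indices $\dd'$ satisfy $\cc^{\dd-\dd'}\in N_G(E_{i_1})$ and the stated sum consists of $2^{r-1}$ pairs, not one; note that the statement only requires $\delta''_j=\delta'_j$ for $j\neq i_1$, i.e.\ the two members of each pair agree off $i_1$, not that they agree with $\dd$. The intended (and shorter) argument is an involution: if $\cc^{\dd-\dd'}\in N_G(E_{i_1})$, multiply by $c_{i_1}^{\pm1}$, which normalizes $E_{i_1}$ and commutes with the other $c_k$, to see that the index $\dd''$ obtained from $\dd'$ by flipping the $i_1$-coordinate also satisfies the condition. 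The admissible set from part (2) is therefore closed under this flip, and regrouping its terms into orbits of the flip yields the stated formula.
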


\begin{proof}
Using the faithfulness condition in Definition
\ref{def:failthfulcollection} and Remark
\ref{remark:equalsequencesubspaces}, a straightforward computation
shows that $(\dd',\kk)\in\C(\dd,\ii)$ (cf. definition of $\C(\dd,\ii)$
in Equation \eqref{equ:coefficientforZgeneral} above) if and only if
$\kk=\ii$ and $\cc^{\dd-\dd'}\in N_G(E)$, and similarly,
$(\dd',\kk)\in\D(\dd,\ii)$ (cf. definition of $\D(\dd,\ii)$ in
Equation \eqref{equ:coefficientfordZgeneral} above) if and only if
$\kk=\ii$ and 
$\cc^{\dd-\dd'}\in N_G(E_{i_1})$. In other words,
\begin{align*}
\C(\dd,\ii)&=\{(\dd',\ii)\in J\times S^r_{r-1}~|~\cc^{\dd-\dd'}\in N_G(E)\}\qbox{and}
\label{equ:coefficientforZgeneral}\\ 
\D(\dd,\ii)&=\{(\dd',\ii)\in J\times S^r_{r-1}~|~\cc^{\dd-\dd'}\in N_G(E_{i_1})\}.
\end{align*}
From here, we immediately get the assertions \eqref{theorem:generalizationfaithfulcollectionCsimplified} and
\eqref{theorem:generalizationfaithfulcollectionDsimplified}.

Suppose the hypotheses in point $(3)$ hold. Again, since
$\ls{\cc^{\dd}}E=\ls{\cc^{\dd'}}E
\Longleftrightarrow \ls{\cc^{\dd-\dd'}}E=E$,
where each $\dd_j-\dd'_j\in\{-1,0,+1\}$, the faithfulness condition
implies that  
\[
\ls{c^{\dd-\dd'}}\langle e_i\rangle=
\ls{c_i^{\dd_i-\dd'_i}}\langle e_i\rangle=\langle e_i\rangle,
\]
where we have used that $[c_i,c_j]=1$, that $c_j\in C_G(E_j)$ and that
$e_i\in E_j$ for $i\neq j$.
By assumption $c_i\notin N_G(\langle e_i\rangle)$, which forces
$\delta_i=\delta'_i$, and this holds for all $1\leq i\leq r$.

Finally, for point
\eqref{theorem:generalizationfaithfulcollectionDsimplifiedagain},
let $\dd'$ be such that
$\ls{\cc^{\dd}}E_{i_1}=\ls{\cc^{\dd'}}E_{i_1}$.
If $\delta'_{i_1}=0$ then
$\dd'=(\delta'_1,\ldots,0,\ldots,\delta'_r)$ 
and we conjugate by $c_{i_1}$ to obtain
\[
\ls{c_{i_1}\cc^{\dd'}}E_{i_1}=\ls{\cc^{\dd'}c_{i_1}}E_{i_1}=
\ls{\cc^{\dd'}}E_{i_1}=\ls{\cc^{\dd}}E_{i_1},
\]
where we have used that $[c_i,c_j]=1$ and that $c_{i_1}\in N_G(E_{i_1})$. So we
deduce that $\dd''=(\delta'_1,\ldots,1,\ldots,\delta'_r)$ also
appears in the sum in
\eqref{theorem:generalizationfaithfulcollectionDsimplified} of this theorem.
If $\delta'_{i_1}=1$ then $\dd'=(\delta'_1,\ldots,1,\ldots,\delta'_r)$
and we conjugate by $c_{i_1}^{-1}$ to obtain 
\[
\ls{c_{i_1}^{-1}\cc^{\dd'}}E_{i_1}=\ls{\cc^{\dd'}c_{i_1}^{-1}}E_{i_1}=
\ls{\cc^{\dd'}}E_{i_1}=\ls{\cc^{\dd}}E_{i_1},
\]
where we have used that $[c_i,c_j]=1$ and that $c_{i_1}\in N_G(E_{i_1})$. So we
deduce that $\dd''=(\delta'_1,\ldots,0,\ldots,\delta'_r)$ also
appears in the sum. 
\end{proof}

\begin{thm}\label{theorem:generalizationcentralizernormalizer}
Let $E=\langle e_1,\dots,e_r\rangle$ be a maximal elementary abelian
$p$-subgroup of $G$ of rank $r$  and assume that
there are elements $c_i\in C_G(E_i)\setminus N_G(\langle e_i\rangle)$ with $[c_i,c_j]=1$
for all $1\leq i,j\leq r$.
Then $\widetilde H_{r-1}(|\A_p(G)|)\neq 0$ and hence Quillen's
conjecture holds for $G$. If furthermore $r=\rk_p(G)$, then $\Q\D_p$
holds for $G$.
\end{thm}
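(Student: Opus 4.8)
The plan is to assemble the three preceding theorems: first produce a faithful collection from the hypotheses, then read off the coefficients $C_{\dd,\ii}$ and $D_{\dd,\ii}$ via Theorem~\ref{theorem:generalizationfaithfulcollection}, and finally choose the weights $\aa$ so that Theorem~\ref{thm:noncontractiblemoregeneral} applies.

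First I would observe that the hypothesis here is a strengthening of the one in Lemma~\ref{lem:sufficientforfaithfulcollection}. Indeed, since $C_G(e_i)\subseteq N_G(\langle e_i\rangle)$, any element of $C_G(E_i)\setminus N_G(\langle e_i\rangle)$ lies in $C_G(E_i)\setminus C_G(e_i)$. Together with $[c_i,c_j]=1$ for all $i,j$, Lemma~\ref{lem:sufficientforfaithfulcollection} then shows that $\{E_i,c_j\}$ is a faithful collection, so that Theorem~\ref{theorem:generalizationfaithfulcollection} is available with $E$ maximal of rank $r$.

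Next I would set $J=\{\dd=(\delta_1,\dots,\delta_r)\mid\delta_i\in\{0,1\}\}$ and $\xx=\{\cc^{\dd}\}$ as in that theorem. The hypothesis $c_i\in C_G(E_i)\setminus N_G(\langle e_i\rangle)$ is precisely the condition in part~(3), so $C_{\dd,\ii}=\sgn(\ii)\,a_{\dd}$. Moreover $C_G(E_i)\subseteq N_G(E_i)$, so part~(4) applies as well and expresses $D_{\dd,\ii}$ as $(-1)^{r-1}\sgn(\ii)$ times a sum $\sum(a_{\dd'}+a_{\dd''})$ over pairs $\dd',\dd''\in J$ that agree in every coordinate except $i_1$, where $\delta'_{i_1}+\delta''_{i_1}=1$.

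The \emph{crux} is then to choose $\aa$ so that $D_{\dd,\ii}=0$ for all $\dd,\ii$ while keeping $C_{\dd,\ii}\neq 0$ for some $\dd,\ii$. The alternating choice $a_{\dd}=(-1)^{|\dd|}$, with $|\dd|=\delta_1+\dots+\delta_r$, does both: since each pairing in part~(4) flips exactly the coordinate $i_1$, we have $|\dd''|=|\dd'|\pm 1$, hence $a_{\dd'}+a_{\dd''}=0$, and so every $D_{\dd,\ii}$ vanishes; on the other hand $C_{\dd,\ii}=\sgn(\ii)(-1)^{|\dd|}\neq 0$. Feeding this into Theorem~\ref{thm:noncontractiblemoregeneral} yields $0\neq[Z_{G,\xx,\aa}]\in\widetilde H_{r-1}(|\A_p(G)|)$, whence $|\A_p(G)|$ is not contractible and Quillen's conjecture holds for $G$; and if $r=\rk_p(G)$ this is exactly $\Q\D_p$. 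Essentially all the real work has been front-loaded into the computation of $C_{\dd,\ii}$ and $D_{\dd,\ii}$ in Theorem~\ref{theorem:generalizationfaithfulcollection}, so the only genuine step left is the combinatorial one of finding weights that simultaneously annihilate all boundary coefficients yet leave some cycle coefficient nonzero. I expect the single point needing care to be checking that the sign $(-1)^{|\dd|}$ is compatible with the pairing of part~(4) for \emph{every} choice of $i_1$, but this reduces to the trivial fact that flipping one coordinate reverses the sign.
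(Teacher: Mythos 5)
Your proof is correct and follows exactly the paper's own route: establish faithfulness via Lemma~\ref{lem:sufficientforfaithfulcollection}, apply parts (3) and (4) of Theorem~\ref{theorem:generalizationfaithfulcollection} with the alternating weights $a_{\dd}=(-1)^{\delta_1+\cdots+\delta_r}$, and conclude with Theorem~\ref{thm:noncontractiblemoregeneral}. Your write-up in fact supplies the cancellation argument for $D_{\dd,\ii}=0$ that the paper leaves implicit.
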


\begin{proof}
Note that the collection $\{E_i,c_j\}$ is faithful by Lemma
\ref{lem:sufficientforfaithfulcollection}. Now apply Theorem
\ref{thm:noncontractiblemoregeneral} with $A=\ZZ$, and Theorem \ref{theorem:generalizationfaithfulcollection}\eqref{theorem:generalizationfaithfulcollectionCsimplifiedagain} and
\eqref{theorem:generalizationfaithfulcollectionDsimplifiedagain} with
$a_{\dd}=(-1)^{\dd}=(-1)^{\delta_1+\ldots+\delta_r}$ for
$\dd=(\delta_1,\ldots,\delta_r)$, $\delta_i\in\{0,1\}$. 
\end{proof}

\begin{defn}\label{def:ad-coll}
A collection satisfying the assumptions of
Theorem~\ref{theorem:generalizationcentralizernormalizer} is called
{\em admissible}. That is, given a maximal elementary abelian
$p$-subgroup $E=\langle e_1,\dots,e_r\rangle$ of $G$ with
$\rk_p(E)=r$, an admissible collection for $G$ is a collection $\{E_i,c_j\}$
of subgroups $E_i=\langle e_1,\ldots,\widehat{e_i},\ldots,e_r\rangle$
of $G$ and elements $c_j$ of $G$, such that
$c_i\in C_G(E_i)\setminus N_G(\langle e_i\rangle)$ and $[c_i,c_j]=1$
for all $1\leq i,j\leq r$. Note that such a collection is faithful by
Lemma \ref{lem:sufficientforfaithfulcollection}.
\end{defn}

%%%%%%%%%%%%%%%%%%%%%%%%%%%%%%%%%%%%%

\section{Applications to symmetric and classical groups}
\label{section:pextensionsB1}

We start this section with an observation which is useful when
investigating Quillen's conjecture and $\Q\D_p$ for $p'$-central
quotients of finite groups.

\begin{remark}\label{rem:faithfultoquotient}
Let $E=\langle e_1,\dots,e_r\rangle$ be a maximal elementary abelian
$p$-subgroup of $G$ of rank $r\leq \rk_p(G)$ and let
$c_1,\ldots,c_r\in G$. Let $N$ be a normal $p'$-subgroup of $G$
satisfying that for each $\ii\in S^r_l$ and for all
$\ee=(\epsilon_1,\dots,\epsilon_r)$ with $\epsilon_i\in\{-1,0,1\}$ we
have $\ls{\cc^{\ee}}E_{\ii}\cap EN\leq E$. In particular this holds
whenever $N$ is a normal $p'$-subgroup of $G$ which also centralizes
$E$ (for instance, if $N$ is a central $p'$-subgroup of $G$). Then: 
\begin{enumerate}
\item\label{it:quotient1} $\{E_i,c_j\}$ is faithful if and only if
$\{\ovl{E_i},\ovl{c_j}\}$ is faithful. 
\item\label{it:quotient2} $c_{i}\in C_G(E_i)\setminus N_G(\langle e_i\rangle)$
if and only if
$\ovl{c_{i}}\in C_{\ovl G}(\ovl E_i)\setminus N_{\ovl G}(\langle \ovl{e_i}\rangle)$. 
\end{enumerate}
\end{remark}

\begin{proof}
For part \eqref{it:quotient1}, assume first that $\{E_i,c_j\}$ is
faithful and that
$\ls{\ovl{\cc}^{\ee}}{\ovl e_i}\leq\ovl{E}$, i.e.,
that $\ls{\cc^{\ee}}e_i\in EN$.
Then, by assumption we have $\ls{\cc^{\ee}}e_i\in E$ and, as
$\{E_i,c_j\}$ is faithful, we get that
$\cc^{\ee}\in N_G(\langle e_i\rangle)$. Hence we also get
$\overline{\cc}^{\ee}\in N_{\ovl G}(\langle \ovl e_i\rangle)$. 
Conversely, suppose that $\{\ovl{E_i},\ovl{c_j}\}$ is faithful.
Suppose that $\ls{\cc^{\ee}}e_i\in E$.
Then $\ls{\ovl{\cc}^{\ee}}{\ovl e_i}\in\ovl E$ and by assumption 
$\ovl{\cc}^{\ee}\in N_{\ovl G}(\langle\ovl e_i\rangle)$. It follows
that $\ls{\cc^{\ee}}e_i\in \langle e_i\rangle N\leq E$. By
Dedekind's modular law,
$\langle e_i\rangle N\cap E=\langle e_i\rangle(N\cap E)=
\langle e_i\rangle$. Thus $\{E_i,c_j\}$ is faithful.

For part~\eqref{it:quotient2}, assume that
$c_i\in C_G(E_i)\setminus N_G(\langle e_i\rangle)$ for all $i$.
Then $\ovl{c_i}\in C_{\ovl G}(\ovl{E_i})$. We claim that
$\ovl{c_i}\notin N_{\ovl G}(\langle\ovl{e_i}\rangle)$. Indeed, by
assumption, if $\ls{c_i}e_i=e_i^{a_i}n$ for some $n\in N$ and integer
$0<a_i<p$, then
$\ls{c_i}e_i\in\ls{c_i}{\langle e_i\rangle}\cap EN\leq E$
and since $E\cap N=\{1\}$, we must have $n=1$ and
$c_i\in N_G(\langle e_i\rangle)$.

Conversely, if $\ovl{c_i}\notin N_{\ovl G}(\langle\ovl{e_i}\rangle)$,
then $c_i$ cannot normalize $\langle e_i\rangle$ for any
$1\leq i\leq r$. 
It remains to see that if $\ovl{c_{i}}\in C_{\ovl G}(\ovl E_i)$, then
$c_i$ centralizes $E_i$. For any $j\neq i$, if
$\ls{c_i}e_j\in e_jN$, then by assumption,
$\ls{c_i}e_j\in\ls{c_i}{\langle e_j\rangle}\cap EN\leq E$, and so
we must have $\ls{c_i}e_j=e_j$ since $E\cap N=\{1\}$.
\end{proof}

In the remainder of this section, we exhibit admissible collections
for many alternating groups and finite classical groups of Lie type in
non-defining characteristic, thereby proving that these groups possess
the Quillen dimension at $p$ property (cf. Definition \ref{def:ad-coll}).

\smallskip
We write $\Sym_X$ for the full permutation group on a finite set
$X$. If $X=\{1,\dots,n\}$, then we write $\Sym_n$
instead. Accordingly, we write $A_X$ and $A_n$ for the corresponding
alternating groups.

\begin{thm}\label{thm:an}
Suppose that $p>3$ and let $n\geq p$.
Let $G=A_n$ or $G=\Sym_n$.
Then there exists an admissible collection for $G$.
\end{thm}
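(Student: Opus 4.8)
The plan is to construct an explicit admissible collection for $G=A_n$ or $\Sym_n$ at the prime $p>3$, where $n\geq p$. By Definition~\ref{def:ad-coll}, this means exhibiting a maximal elementary abelian $p$-subgroup $E=\langle e_1,\dots,e_r\rangle$ of rank $r=\rk_p(G)$ together with elements $c_i\in C_G(E_i)\setminus N_G(\langle e_i\rangle)$ satisfying $[c_i,c_j]=1$ for all $i,j$; once this is done, Theorem~\ref{theorem:generalizationcentralizernormalizer} immediately yields the conclusion. The natural starting point is the standard description of elementary abelian $p$-subgroups of symmetric groups: a rank-one subgroup is generated by a product of $p$-cycles acting on $p$ points, and larger elementary abelian subgroups arise from disjoint supports. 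First I would take the generators $e_1,\dots,e_r$ to be $p$-cycles (or products of $p$-cycles) with pairwise disjoint supports $X_1,\dots,X_r$, where $r=\lfloor n/p\rfloor$, so that $E\cong (\ZZ/p)^r$ and $\rk_p(\Sym_n)=\lfloor n/p\rfloor$.

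The key construction is to choose each $c_i$ to act on the support $X_i$ while centralizing all the other supports. Concretely, I would let $c_i$ be an element normalizing the $p$-cycle $e_i$ within $\Sym_{X_i}$ but not centralizing it: for instance, a $(p-1)$-cycle (or an appropriate power thereof) in $\Sym_{X_i}$ that conjugates $e_i$ to a nontrivial power $e_i^a$ with $a\not\equiv 1$, realizing an element of the normalizer $N_{\Sym_{X_i}}(\langle e_i\rangle)\cong \ZZ/p\rtimes \ZZ/(p-1)$ that acts nontrivially. Because $c_i$ is supported only on $X_i$, it automatically centralizes every $e_j$ with $j\neq i$ (as $e_j$ is supported on the disjoint $X_j$), hence $c_i\in C_G(E_i)$; and since $c_i$ does not centralize $e_i$ but the hypothesis only demands $c_i\notin N_G(\langle e_i\rangle)$, I must be slightly more careful here. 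The cleaner choice is to take $c_i$ moving the support $X_i$ genuinely off itself, or to conjugate $e_i$ to a $p$-cycle on a different point set inside $\Sym_n$ so that $\langle e_i\rangle$ is not preserved; this guarantees $c_i\notin N_G(\langle e_i\rangle)$. The disjointness of supports gives $[c_i,c_j]=1$ for $i\neq j$ for free.

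The main obstacle is twofold. First, the condition $c_i\notin N_G(\langle e_i\rangle)$ is stronger than merely $c_i\notin C_G(e_i)$, so I cannot use a normalizing element of $\Sym_{X_i}$; I genuinely need $c_i$ to move $\langle e_i\rangle$ to a different cyclic subgroup, which requires enough room in $X_i$ (at least $2p$ points, or a transposition-like move sending the cycle to a disjoint one). This is where $n\geq p$ might need strengthening, or where the detailed bookkeeping on supports lives — I would arrange $c_i$ to swap the $p$-cycle $e_i$ with a conjugate on fresh points, which forces me to track how many points are available. Second, and more seriously, the ambient group may be $A_n$ rather than $\Sym_n$: the chosen elements $c_i$ must be \emph{even} permutations. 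Since a $p$-cycle with $p$ odd is already even, and products of disjoint cycles can be adjusted to even parity, I expect this to be manageable, but verifying $c_i\in A_n$ (possibly by composing with a transposition supported on $X_i$ that does not interfere with the centralizing/non-normalizing conditions) is the delicate parity calculation I would expect to dominate the proof.

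Finally, I would confirm that $E$ is a \emph{maximal} elementary abelian $p$-subgroup, not merely one of maximal rank: one must check that no $p$-element outside $E$ commutes with all of $E$ and extends it, which follows from the fact that $E$ already uses a full set of $r=\lfloor n/p\rfloor$ disjoint $p$-cycle blocks and the remaining at most $p-1$ points cannot support a further $p$-cycle. Assembling these verifications — disjoint supports, commutativity, the centralizer/non-normalizer conditions, parity in the alternating case, and maximality of $E$ — completes the construction of the admissible collection and hence the proof via Theorem~\ref{theorem:generalizationcentralizernormalizer}.
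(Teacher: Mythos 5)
Your setup (disjoint $p$-cycles $e_1,\dots,e_r$ with $r=\lfloor n/p\rfloor$, supports $X_1,\dots,X_r$, and $c_i$ supported on $X_i$ so that $c_i\in C_G(E_i)$ and $[c_i,c_j]=1$ come for free) matches the paper's. But the heart of the proof --- actually producing $c_i\notin N_G(\langle e_i\rangle)$ --- is left unresolved, and the way you frame it is wrong. After correctly noting that a $(p-1)$-cycle normalizing $\langle e_i\rangle$ is useless, you assert that you ``genuinely need $c_i$ to move $\langle e_i\rangle$ to a different cyclic subgroup, which requires enough room in $X_i$ (at least $2p$ points)'' and that $n\geq p$ ``might need strengthening.'' This conflates ``a different cyclic subgroup'' with ``a cyclic subgroup on a different support.'' Since $\langle e_i\rangle$ is a Sylow $p$-subgroup of $\Sym_{X_i}\cong\Sym_p$ and is not normal there (its normalizer $C_p\rtimes C_{p-1}$ has index $(p-2)!>1$ for $p>3$), there already exist elements of $\Sym_{X_i}$ --- indeed of $A_{X_i}$ --- that conjugate $\langle e_i\rangle$ to a \emph{different} order-$p$ subgroup with the \emph{same} support $X_i$. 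No extra points are needed, and the hypothesis $n\geq p$ is exactly right. Your proposed fallback (swapping $e_i$ with a conjugate on fresh points) would in fact break down: when $n<2p$ there are at most $p-1$ leftover points, and even when leftover points exist, all the $c_i$ would have to share them, destroying both $c_i\in C_G(E_i)$ and $[c_i,c_j]=1$.

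The paper's resolution is the one concrete ingredient you are missing: take $c_i$ to be a $3$-cycle on three points of $X_i$, e.g.\ $c_i=((i-1)p+1,(i-1)p+2,(i-1)p+3)$. This is even (so lies in $A_n$, dissolving your parity worry with no extra work), commutes with the other $c_j$ by disjointness, centralizes $E_i$, and conjugates the $p$-cycle $e_i$ to a $p$-cycle that is not a power of $e_i$ precisely because $p>3$ --- this is where the hypothesis $p>3$ enters, and your writeup never uses it. Your maximality check for $E$ is fine. As it stands, though, the proposal does not contain a working construction and instead casts doubt on a hypothesis that is actually sufficient, so I would count this as a genuine gap rather than a complete proof by a different route.
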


\begin{proof}
We refer the reader to \cite[Ch. IV]{AM} for the $p$-local structure of
the alternating and symmetric groups.
The $p$-rank of $G$ is $r=\rk_p(A_n)=\lfloor \frac{n}{p} \rfloor$.
Consider the maximal elementary abelian $p$-subgroup $E$ of rank $r$
generated by  
\[
e_1=(1,\dots, p),\dots,~e_i=((i-1)p+1,\dots, ip),
\dots,~e_r=(r(p-1)+1,\dots, rp).
\]
Define hyperplanes of $E$ by 
$E_i=\langle e_1,e_2,\ldots,\hat e_i,\ldots, e_r\rangle$. 
Write $n=rp+b$ with $0\leq b<p$.
Recall that $|N_{\Sym_n}(\langle e_i\rangle):N_{A_n}(\langle e_i\rangle)|=
|C_{\Sym_n}(E_i):C_{A_n}(E_i)|=2$, and that,
since $p>3$, the groups $A_n$ are simple for $n\geq p$; in
particular, its Sylow $p$-subgroups are not normal ($A_p$ contains
$(p-2)!$ Sylow $p$-subgroups).
We have
\begin{align}
N_G(\langle e_i\rangle)&=\big(\langle e_i,t_i\rangle\times
\Sym_{\{1,\dots,(i-1)p,ip+1,\dots n\}}\big)\cap G
\qbox{and}\label{uu1}\\
C_G(E_i)&=\big(E_i\times\Sym_{\{(i-1)p+1,\dots,ip,rp+1,\dots,n\}}\big)\cap G,\label{uu2}
\end{align}
where
$\displaystyle\langle e_i,t_i\rangle\cong C_p\rtimes C_{p-1}$\;
is the normalizer
of $\langle e_i\rangle$ in $\Sym_{\{(i-1)p+1,\dots,ip\}}$.
Now, by assumption, $r\geq 1$ and $p>3$, and, for each $1\leq i\leq r$,
we choose $c_i=((i-1)p+1~,~(i-1)p+2~,~(i-1)p+3)$. This element clearly belongs to 
$C_G(E_i)$ and a short computation shows that it does not normalize $\langle e_i\rangle$. The collection $\{c_i~,~1\leq i\leq r\}$ satisfies the hypothesis
of Theorem~\ref{theorem:generalizationcentralizernormalizer}. Indeed,
by definition $[c_i,c_j]=1$ for all $i,j$ because the $c_i$ are all
disjoint cycles.

\end{proof}

\begin{remark}
Computational evidence using GAP \cite{GAP} shows that there is no
admissible collection for $p=3$ for the alternating and symmetric
groups of degree $n$ unless $n=4,5$ or $8$. For $n=4,5$, these groups
have $3$-rank $1$ and no non-trivial normal $3$-subgroup. Then it is
immediate that there exists an admissible collection. For $A_8$, we
can take 
$E=\langle e_1,e_2\rangle$ with $e_1=(1, 2, 3)$ and
$e_2=(4,5,6)$, and put $c_1=(1,7)(2,3)$ and $c_2=(4,8)(5,6)$. 
Then, a direct check shows that $\{E_i,c_j\}$ is an admissible
collection for $A_8$ (and so for $\Sym_8$  too).
\end{remark}

\smallskip
We will now adapt the above argument to find admissible 
collections in classical finite groups with trivial $p$-core in
non-defining characteristic, and therefore show that they have the
Quillen dimension at $p$ property. By a {\em classical group}, we
mean $G$ a finite group of Lie type $A_n,~\ls2A_n,~B_n,~C_n,~D_n$ or
$\ls2D_n$, and we refer the reader to \cite[Table 22.1]{MT} for the different
isogeny types and the standard notation that we will use. In
particular, if $G$ is of type $A_{n-1}$ defined over a field with $q$
elements, then $G$ is a central quotient of a group $G^*$ such that
$\SL_n(q)\leq G^*\leq\GL_n(q)$, and $G=G^*/Z$ for a central subgroup
$Z\leq Z(G^*)$. If $Z=Z(G^*)$, then we write $PG^*=G^*/Z(G^*)$.
We also assume that $p$ is coprime to the characteristic of the field
of definition of $G$. 

\begin{notation}\label{nota:sln}
For any natural number $n$, we write
$\GL_n^+(q)=\GL_n(q)$, $\GL_n^-(q)=\GU_n(q)$,
$\SL_n^+(q)=\SL_n(q)$ and $\SL_n^-(q)=\SU_n(q)$ for linear, unitary,
special linear and special unitary groups respectively. So a
superscript ``$+$'' means ``linear'' and ``$-$'' means ``unitary''.
We let $d$ be the multiplicative order of
$q$ modulo $p$, that is, the smallest positive integer such that $p$
divides $q^d-1$. Finally, if $n$ is an integer, we write $(n)_p$ for
the $p$-part of $n$.
\end{notation}

For the central quotient $PG=G/Z(G)$ of a classical
group $G$, recall that $O_p(G)\leq Z(G)$ except for some small linear and unitary groups. Moreover, we have isomorphisms $Z(\GL_n^\epsilon(q))\cong C_{q-\epsilon}$ and
$Z(\SL_n^\epsilon(q))\cong C_{(n,q-\epsilon)}$, for $\epsilon=\pm1$,
and the centers of symplectic and orthogonal groups are
$2$-groups (cf. \cite[Table 24.2]{MT}). Hence,
$PG$ is a $p'$-quotient of $G$ if and only if
$O_p(G)=1$. If so, $PG$ and $G$ have the same
$p$-subgroup structure \cite[Lemma 0.11]{AS1993}, in particular,
$\A_p(G)\cong \A_p(PG)$. In this case, we show that
$\Q\D_p$ holds for both groups by exhibiting an admissible 
collection for $G$, and then applying Remark
\ref{rem:faithfultoquotient}, we obtain that $PG$ possesses an
admissible  collection too. 

In the remaining case, i.e., if $O_p(G)>1$, we show that there is no
admissible collection for $G$, using Lemma \ref{lem:pstable}, and we
prove that $\Q\D_p$ holds for $PG$ by exhibiting an admissible
collection. The next table summarizes the different cases, indicating
the conditions and the results to be applied in each case. For
conciseness, we omitted from the table the exceptions $(p,q)=(3,2)$,
which apply whenever $G$ is not a linear group, and the cases $\PSL_3(q)$ with $q\equiv 1\pmod{3}$ at the prime $3$.

\begin{center}
  \begin{tabular}{ | l | c | c | c |}
    \hline
    Group $G$ & \multicolumn{2}{c|}{$O_p(G)=1$} & $O_p(G)>1$ \\ \hline
    &\multicolumn{2}{c|}{$d>1$}&$d=1$\\
    $\GL_n(q)$&\multicolumn{2}{c|}{$\Q\D_p$ by \ref{prop:sln-d-gt-1},
      \ref{thm:classic-dpos}}&$\nexists$ admissible coll. by \ref{lem:sln-p-stable}\\
    $\PGL_n(q)$&\multicolumn{2}{c|}{$\Q\D_p$ by \ref{rem:faithfultoquotient}}&$\Q\D_p$ by \ref{cor:psln}\\\hline
   &$d>1$&$d=1,(p,n)=1$&$d=1,(p,n)=p$\\
   $\SL_n(q)$&$\Q\D_p$ by \ref{prop:sln-d-gt-1}, \ref{thm:classic-dpos}& $\Q\D_p$ by \ref{prop:sln-d-eq-1}& $\nexists$ admissible coll. by \ref{lem:sln-p-stable}\\
   $\PSL_n(q)$&$\Q\D_p$  by \ref{rem:faithfultoquotient}&$\Q\D_p$ by \ref{rem:faithfultoquotient}&$\Q\D_p$ by \ref{cor:psln}\\\hline    
    &\multicolumn{2}{c|}{$d\neq 2$}&$d=2$\\
 $\GU_n(q)$&\multicolumn{2}{c|}{$\Q\D_p$ by \ref{thm:classic-dpos}}&$\nexists$ admissible coll. by \ref{lem:sln-p-stable}\\
 $\PGU_n(q)$&\multicolumn{2}{c|}{$\Q\D_p$ by \ref{rem:faithfultoquotient}}&\\\hline
&\multicolumn{2}{c|}{$d\neq 2$}&$d=2,(p,n)=p$\\
 $\SU_n(q)$&\multicolumn{2}{c|}{$\Q\D_p$ by \ref{thm:classic-dpos}}&$\nexists$ admissible coll. by \ref{lem:sln-p-stable}\\
 $\PSU_n(q)$&\multicolumn{2}{c|}{$\Q\D_p$ by \ref{rem:faithfultoquotient}}&\\\hline 
 symplectic&\multicolumn{2}{c|}{$\Q\D_p$ for $G$ by  \ref{thm:classic-dpos}}& \\
orthogonal &\multicolumn{2}{c|}{$\Q\D_p$ for $PG$ by \ref{rem:faithfultoquotient}}&--\\
    \hline
  \end{tabular}
\end{center}

\smallskip

Note that $d=1$ if and only if $q\equiv1\pmod p$, and that $d=2$ if
and only if $q\equiv-1\pmod p$, which determines when a linear or
unitary group has a nontrivial $p$-core. The last two rows denote any
symplectic or orthogonal finite group of Lie type, as
described in \cite[Table 22.1]{MT}.

\begin{prop}\label{lem:sln-p-stable}
Suppose that $p$ is odd and that $n\geq 1$.
Let $G=\GL_n^\epsilon(q)$, or
$G=\SL_n^\epsilon(q)$, and assume that $q-\epsilon\equiv0\pmod p$, or
$(q-\epsilon,n)\equiv0\pmod p$, respectively.
Then, there exists no admissible collection for $G$. 
\end{prop}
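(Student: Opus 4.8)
The plan is to reduce the statement to Lemma~\ref{lem:pstable} by verifying its hypothesis $O_p(G)\cap Z(G)>1$. First I would observe that the defining condition of an admissible collection (Definition~\ref{def:ad-coll}) requires elements $c_i\in C_G(E_i)\setminus N_G(\langle e_i\rangle)$; since centralizing implies normalizing, $C_G(e_i)\subseteq N_G(\langle e_i\rangle)$, so every admissible collection is in particular a collection $\{E_i,c_j\}$ with $c_i\in C_G(E_i)\setminus C_G(e_i)$ for all $i$. Thus, once I know $O_p(G)\cap Z(G)>1$, Lemma~\ref{lem:pstable} applied to the (poset-)maximal elementary abelian $p$-subgroup $E$ underlying any prospective admissible collection immediately gives a contradiction, ruling out the existence of any admissible collection for $G$.

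The remaining work is to establish $O_p(G)\cap Z(G)>1$ under the stated numerical hypotheses. Here I would invoke the isomorphisms recalled in the text, namely $Z(\GL_n^\epsilon(q))\cong C_{q-\epsilon}$ and $Z(\SL_n^\epsilon(q))\cong C_{(n,q-\epsilon)}$. The hypotheses $p\mid(q-\epsilon)$ in the $\GL$ case, and $p\mid(q-\epsilon,n)$ in the $\SL$ case, say precisely that $p$ divides $|Z(G)|$, so the Sylow $p$-subgroup $S$ of the abelian group $Z(G)$ is nontrivial. Since $S$ is characteristic in $Z(G)$ and $Z(G)\trianglelefteq G$, the subgroup $S$ is a nontrivial normal $p$-subgroup of $G$, whence $S\leq O_p(G)$; being central, $S\leq Z(G)$ as well. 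Therefore $O_p(G)\cap Z(G)\geq S>1$, as needed.

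The proof is short because all the substance is concentrated in the two facts just used, neither of which is delicate. The only point deserving care is the direction of the implication in the first paragraph: admissibility demands the \emph{stronger} non-membership $c_i\notin N_G(\langle e_i\rangle)$, so a collection ruled out under the weaker condition $c_i\notin C_G(e_i)$ is a fortiori not admissible; I would state the inclusion $C_G(e_i)\subseteq N_G(\langle e_i\rangle)$ explicitly so that Lemma~\ref{lem:pstable} applies verbatim. No distinction between the maximal-rank and non-maximal-rank cases is required, since Lemma~\ref{lem:pstable} holds for \emph{every} maximal elementary abelian $p$-subgroup, not merely those of maximal order, and an admissible collection is built on such a maximal $E$.
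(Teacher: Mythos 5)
Your proposal is correct and follows the same route as the paper: both reduce to Lemma~\ref{lem:pstable} via the center isomorphisms $Z(\GL_n^\epsilon(q))\cong C_{q-\epsilon}$ and $Z(\SL_n^\epsilon(q))\cong C_{(n,q-\epsilon)}$, which give $O_p(G)\cap Z(G)>1$ under the stated congruences. Your explicit remark that $C_G(e_i)\subseteq N_G(\langle e_i\rangle)$, so that an admissible collection a fortiori satisfies the hypothesis of Lemma~\ref{lem:pstable}, is a detail the paper leaves implicit but is exactly the right justification.
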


\begin{proof}
Recall that $Z(\GL_n^\epsilon(q))$ is cyclic of order
$q-\epsilon$ and $Z(\SL_n^\epsilon(q))$ is cyclic of order
$(q-\epsilon,n)$. So, the assumptions say that $O_p(G)\cap Z(G)>1$ and
the assertion follows from Lemma~\ref{lem:pstable}.
\end{proof}

We now consider the linear groups with $d>1$.
 
\begin{notation}\label{nota:betas}
We denote by $\beta_{ab}$ the $n\times n$ matrix with coefficients in
$\FF_q$ satisfying $(\beta_{ab})_{cd}=0$ unless $a=c$ and $b=d$ , in
which case $(\beta_{ab})_{ab}=1$. 
\end{notation}

\begin{prop}\label{prop:sln-d-gt-1}
Suppose that $p$ is odd and let $G=\GL_n(q)$ or $G=\SL_n(q)$
with $d>1$ except $\SL_2(2)$ for $p=3$. Then there exists an admissible collection for
$G$.
\end{prop}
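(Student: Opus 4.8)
The plan is to exploit the $d$-dimensional block structure of $p$-elements. Since $d>1$ is the order of $q$ modulo $p$, we have $p\mid q^d-1$ but $p\nmid q^e-1$ for $0<e<d$, so every element of order $p$ acts on $\FF_q^n$ with $d$-dimensional nontrivial irreducible constituents, and $\rk_p(G)=\lfloor n/d\rfloor=:r$. First I would fix a decomposition $\FF_q^n=V_1\oplus\cdots\oplus V_r\oplus W$ with $\dim_{\FF_q}V_i=d$ and $\dim_{\FF_q}W=n-dr$, and take as $e_i$ the element of order $p$ inside a Singer cycle of $\GL(V_i)\cong\GL_d(q)$, extended by the identity on the other summands. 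Then $E=\langle e_1,\dots,e_r\rangle$ is elementary abelian of rank $r=\rk_p(G)$; as it already attains the maximal rank it is automatically maximal in $\A_p(G)$, since any $E'\geq E$ in $\A_p(G)$ has rank $r$ and hence order $p^r=|E|$, forcing $E'=E$.

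Next I would take each $c_i$ to be supported on its own block: choose $c_i\in\GL(V_i)$ with $c_i\notin N_{\GL(V_i)}(\langle e_i\rangle)$ and extend it by the identity on the remaining summands. With this choice the three conditions of Definition~\ref{def:ad-coll} are immediate. The commutators $[c_i,c_j]=1$ hold because $c_i$ and $c_j$ are supported on the disjoint blocks $V_i$ and $V_j$. We have $c_i\in C_G(E_i)$ because each generator $e_j$ of $E_i$ (for $j\neq i$) is supported on $V_j$, disjoint from the support $V_i$ of $c_i$. Finally $c_i\notin N_G(\langle e_i\rangle)$, since conjugation by $c_i$ only affects the $V_i$-block and there it moves $\langle e_i\rangle$ by construction. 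Theorem~\ref{theorem:generalizationcentralizernormalizer} then yields $\Q\D_p$ for $G=\GL_n(q)$.

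For $G=\SL_n(q)$ the only additional point is the determinant. I would first note that $e_i\in\SL(V_i)$: identifying the Singer cycle with $\FF_{q^d}^\times$ acting by multiplication, the determinant of the action of $\alpha$ is $N_{\FF_{q^d}/\FF_q}(\alpha)$, and since $p\mid q^d-1$ while $p\nmid q-1$, the Sylow $p$-subgroup of $\FF_{q^d}^\times$ lies in the norm-one kernel of order $(q^d-1)/(q-1)$; hence $\det e_i=1$. I would then select $c_i\in\SL(V_i)\setminus N_{\SL(V_i)}(\langle e_i\rangle)$, again extended by the identity, so that $c_i\in\SL_n(q)$, and repeat verbatim the verification above.

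The one substantive point — and the main obstacle — is the existence of such a $c_i$ inside $\SL_d(q)$, equivalently the non-normality of $\langle e_i\rangle$ in $\SL_d(q)$ (non-normality there also forces non-normality in $\GL_d(q)$, since $\SL_d(q)\trianglelefteq\GL_d(q)$). Because $d>1$ forces $p\nmid q-1$, the prime $p$ does not divide $|Z(\SL_d(q))|=(d,q-1)$, so $\langle e_i\rangle$ is non-central. As $\SL_d(q)$ is quasisimple for all relevant $(d,q)$ with the sole exception of $\SL_2(2)\cong\Sym_3$, a non-central subgroup of order $p$ cannot be normal, and the required $c_i$ exists. In that exceptional case $\GL_2(2)=\SL_2(2)\cong\Sym_3$ with $p=3$, the Sylow $3$-subgroup is normal, no $c_1\notin N_G(\langle e_1\rangle)$ exists, and indeed there is no admissible collection — exactly the case excluded from the statement.
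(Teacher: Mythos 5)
Your construction for the generic case is essentially the paper's: block-diagonalize $E$ along a decomposition into $d$-dimensional Singer blocks, support each $c_i$ on its own block, and reduce everything to the non-normality of $\langle e_i\rangle$ inside the block $\GL_d(q)$ (equivalently, to $N_{\GL_d(q)}(\langle e_i\rangle)=\GL_1(q^d)\rtimes C_d$ being proper). The determinant argument for $\SL_n(q)$, the commutation of the $c_i$, and the containment $c_i\in C_G(E_i)$ are all fine.

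The gap is in your last paragraph. You correctly locate the obstruction in the \emph{block} $\SL_d(q)\cong\SL_2(2)$, but then you dispose of it as if it only occurred when the whole group is $\SL_2(2)$. In fact the obstruction is local to each $d\times d$ block, so it kills your construction for \emph{every} $\GL_n(2)=\SL_n(2)$ with $p=3$ (hence $d=2$), for all $n$, not just $n=2$. Only $\SL_2(2)$ is excluded from the statement; the groups $\SL_n(2)$ for $n\geq 3$ at $p=3$ must still be handled, and for them no $c_i$ supported on a single block can work, since $\langle e_i\rangle$ is the normal Sylow $3$-subgroup of $\GL(V_i)\cong\Sym_3$. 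Worse, for $n\geq 4$ the failure is not just of your choice of $c_i$ but of your choice of frame for $E$: with the block-supported generators $e_1,\dots,e_r$, one checks (e.g.\ already in $\SL_4(2)$, where $C_G(E_1)\cong\GL_2(2)\times C_3$) that \emph{all} of $C_G(E_i)$ normalizes $\langle e_i\rangle$, so no admissible collection exists with that generating set. The paper spends roughly half of its proof on exactly this case: $n=3$ is trivial (rank $1$, $\SL_3(2)$ simple), and for $n\geq 4$ it reduces to $\SL_4(2)$ and $\SL_6(2)$ and exhibits explicit, computer-found collections in which the generators of $E$ are spread across several blocks and the $c_i$ are not block-diagonal. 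Your proof as written establishes the proposition only under the additional hypothesis $(p,q)\neq(3,2)$.
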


\begin{proof}
Write $n=rd+f$, with $0\leq f<d$. Then the unique (up to
$G$-conjugacy) maximal elementary abelian $p$-subgroup $E$ of $G$ has
rank $r$.
We split the proof into two cases. First, suppose that $(p,q)\neq(3,2)$.
Using \cite[Section~4.10, 4.8]{GLSIII}, and the fact that
$\GL_1(q^d)\hookrightarrow\GL_d(q)$, we can choose
$E\leq G$ as the set of block diagonal matrices of the form
$$E=\langle\theta_1(u),\dots,\theta_r(u)\rangle
\qbox{where $u\in\GL_1(q^d)$ has order $p$, and where}$$
$$\begin{array}{rcccl}
\theta_i~:~\GL_1(q^d)&\to&\GL_d(q)&\to&\GL_n(q)\\
x&\mapsto&X&\mapsto&\diag(I_{(i-1)d},X,I_{n-id})\end{array}.$$
Put $e_j=\theta_i(u)$ for $1\leq i\leq r$.
Note that $\det(e_i)=1$ because $\det(e_i)$ must be a $p$-th root of
$1$ in $\FF_q$ and $(p,q-1)=1$.
Then 
\[
C_G(E)=G\cap\bigg(\diag\big(\im(\theta_1),\dots,\im(\theta_r)\big)
\times\GL_f(q)\bigg)
\]
with $\im(\theta_i)\cong\GL_1(q^d)$ for all $1\leq i\leq r$.
We have
$$C_G(E_i)\cong G\cap\big(C_{\GL_n(q)}(E)\GL_{d+f}(q)\big)\qbox{and}$$
$$N_G(\langle e_i\rangle)\cong
G\cap\big((\GL_1(q^d)\rtimes C_d)\times\GL_{n-d}(q)
\big)$$
where the factors $\GL_{d+f}(q)$ and $\GL_{n-d}(q)$
intersect in a subgroup isomorphic to $\GL_f(q)$. 
So we can choose
$c_i\in C_G(E_i)\setminus N_G(\langle e_i\rangle)$ in the factor
$\GL_{d}(q)$ properly containing the subgroup
$(\GL_1(q^d)\rtimes C_d)$ in the $i$-th diagonal block for all $i$.
Such choice is possible whenever $p$ is odd, unless  $q=2$, $d=2$ and
$p=3$, because $\GL_1(2^2)\rtimes C_2\cong\GL_2(2)\cong \Sym_3$. Now,
our choice satisfies
$c_i\in C_G(E_i)\setminus N_G(\langle e_i\rangle)$ and $[c_j,c_i]=1$
for all $i,j$. Thus $\{E_i,c_j\}$ is an admissible collection for $G$. 
 
It remains to handle the case $(p,q)=(3,2)$. If $n=2$, then
$O_3(\SL_2(2))\neq 1$ and this case is excluded by assumption. If
$n=3$, recall that a Sylow $3$-subgroup of $G=\SL_3(2)$ has order $3$
and it is not normal in $G$, saying that, trivially, there exists an
admissible collection for $G$. If
$n\geq 4$, we observe that it is enough to consider the groups
$G=\SL_{2k}(2)$, for $k\geq 2$, as the index of $\SL_{2k}(2)$ in
$\SL_{2k+1}(2)$ is not divisible by $3$, so that an admissible
collection for $\SL_{2k}(2)$ induces an admissible collection for
$\SL_{2k+1}(2)$. As above, we can choose an
elementary abelian $3$-subgroup $E$ of $G$ of maximal rank embedded in
the subgroup formed by the diagonal $2\times2$ blocks in
$\SL_{2k}(2)$.
Moreover, we see that it suffices to find admissible collections with
respect to such an $E$ for $G=\SL_4(2)$ and for $G=\SL_6(2)$, because
from them we can obtain admissible collections for $\SL_{2n}(2)$, for
all $n\geq2$.
Using GAP \cite{GAP}, we find the following admissible
collections. Let
$X=\bigl(\begin{smallmatrix}0&1\\1&1\end{smallmatrix}\bigr)$
and
$Y=\bigl(\begin{smallmatrix}0&1\\1&0\end{smallmatrix}\bigr)$. If
$G=\SL_4(2)$, put
$$e_1=\begin{pmatrix}X^2\\&X^2\end{pmatrix},\quad
e_2=\begin{pmatrix}X^2\\&X\end{pmatrix},\quad
c_1=\begin{pmatrix}X\\Y&X\end{pmatrix}\;\hbox{and}\;
c_2=\begin{pmatrix}X\\X&X^2\end{pmatrix},$$
where a blank entry means the zero matrix of appropriate size.
A direct calculation shows that $E=\langle e_1,e_2\rangle=C_3\times C_3$ is
an elementary abelian $3$-subgroup of $G$ of maximal rank,
with $c_j\in C_G(e_{3-j})$ and $c_j\notin N_G(\langle e_j\rangle)$ for
$j=1,2$, and also that $[c_1,c_2]=1$. Thus these elements form an
admissible collection for $\SL_4(2)$.
If $G=\SL_6(2)$, put
$$e_1=\begin{pmatrix}X^2\\&I_2\\&&I_2\end{pmatrix},\quad
e_2=\begin{pmatrix}X\\&X\\&&X\end{pmatrix}\;\hbox{and}\;
e_3=\begin{pmatrix}X^2\\&X^2\\&&X\end{pmatrix},$$
and
$$c_1=\begin{pmatrix}X\\X&I_2\\&&I_2\end{pmatrix},\;
c_2=\begin{pmatrix}X\\&X&Y\\&&X\end{pmatrix}\;\hbox{and}\;
c_2=\begin{pmatrix}X\\&X&X\\&&X^2\end{pmatrix}.$$
A direct computation shows that
$E=\langle e_1,e_2,e_3\rangle=C_3\times C_3\times C_3$ is an
elementary abelian $3$-subgroup of $G$ of maximal rank, with
$c_i\in C_G(\langle e_j,e_k\rangle)$ and
$c_i\notin N_G(\langle e_i\rangle)$ for any $\{i,j,k\}=\{1,2,3\}$, and
also that $[c_i,c_j]=1$ for any $1\leq i,j\leq3$. Thus these elements form an
admissible collection for $\SL_6(2)$.
\end{proof}

The cases $G=\SL_n(q)$, $\PSL_n(q)$,
and $\PGL_n(q)$ when $d=1$ are more subtle.
In this case, $O_p(\SL_n(q))=1$ if and only if $(n,p)=1$, i.e. when
$\SL_n(q)$ has no scalar matrix $\zeta I_n$ where
$\zeta^n=\zeta^p=1\neq\zeta$.
From \cite[Section 4.10]{GLSIII},
the $p$-ranks of $\SL_n(q)$, $\PSL_n(q)$ and $\PGL_n(q)$ may be found, and we use this information in the proofs of the next two propositions.

\begin{prop}\label{prop:sln-d-eq-1}
Let $G=\SL_n(q)$ with $p$ an odd prime dividing $q-1$ and
coprime to $n$. Then there exists an admissible
collection for $G$. 
\end{prop}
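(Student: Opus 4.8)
The plan is to exhibit an admissible collection in the sense of Definition~\ref{def:ad-coll}: a maximal elementary abelian $p$-subgroup $E=\langle e_1,\dots,e_{r}\rangle$ of rank $r=\rk_p(G)=n-1$ together with pairwise commuting elements $c_i\in C_G(E_i)\setminus N_G(\langle e_i\rangle)$, and then to invoke Theorem~\ref{theorem:generalizationcentralizernormalizer}. Since $p\mid q-1$, the field $\FF_q$ contains a primitive $p$-th root of unity $\zeta$, and by \cite[Section 4.10]{GLSIII} the $p$-rank of $G$ is $n-1$, realised inside the $p$-part of the diagonal torus by the sum-zero diagonal $p$-elements. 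The guiding idea is to pick the generators so that deleting any single $e_i$ forces two coordinates to coincide. Using $p\nmid n$, I would set
\[
e_i=\diag(\zeta^{-1},\dots,\zeta^{-1},\underbrace{\zeta^{\,n-1}}_{i\text{-th}},\zeta^{-1},\dots,\zeta^{-1}),\qquad 1\le i\le n-1,
\]
each of determinant $\zeta^{(n-1)-(n-1)}=1$, so $e_i\in G$. Writing an element of $E$ additively as its exponent vector in $(\ZZ/p)^n$, the exponent vector of $e_i$ is $n\,\mathbf{f}_i-\mathbb 1$; a short check shows $e_1,\dots,e_{n-1}$ are independent and span the full sum-zero space, so $E=\langle e_1,\dots,e_{n-1}\rangle$ is elementary abelian of maximal rank $r=n-1$ and is therefore a maximal element of $\A_p(G)$.

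The decisive feature is that for each $i$ the $i$-th and $n$-th coordinates of $e_j$ agree for every $j\ne i$ (both equal $\zeta^{-1}$, as $i,n\ne j$), whereas they differ for $j=i$. Hence $E_i=\langle e_j:j\ne i\rangle=\{h\in E:h_i=h_n\}$ acts by a single scalar on the plane spanned by the $i$-th and $n$-th standard basis vectors, so $C_G(E_i)$ contains the copy of $\SL_2$ acting on that plane and fixing the other coordinates. I would then take the transvections $c_i=I_n+\beta_{in}$, $1\le i\le n-1$, in the notation of Notation~\ref{nota:betas}. A direct computation gives $c_i\,h\,c_i^{-1}=h+(h_n-h_i)\beta_{in}$ for diagonal $h=\diag(h_1,\dots,h_n)$; thus $c_i$ centralises $E_i$ (where $h_i=h_n$), while applied to $h=e_i$, where $h_i=\zeta^{\,n-1}\ne\zeta^{-1}=h_n$ precisely because $p\nmid n$, it produces a non-diagonal matrix, so $c_i\notin N_G(\langle e_i\rangle)$. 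This gives $c_i\in C_G(E_i)\setminus N_G(\langle e_i\rangle)$ for every $i$.

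The remaining hypothesis $[c_i,c_j]=1$ is the main obstacle, and it is where the construction has to be chosen with care: the geometrically natural option of pairing consecutive coordinates $\{i,i+1\}$ by simple-root transvections fails, since transvections attached to adjacent simple roots do not commute. The reason for letting all the pairs share the single common coordinate $n$ is exactly that $\beta_{in}\beta_{jn}=0$ for all $i,j\le n-1$, whence $c_ic_j=I_n+\beta_{in}+\beta_{jn}=c_jc_i$. Consequently $\{E_i,c_j\}$ meets the hypotheses of Theorem~\ref{theorem:generalizationcentralizernormalizer} and is faithful by Lemma~\ref{lem:sufficientforfaithfulcollection}, so it is an admissible collection for $G$; and since $r=\rk_p(G)$ this also yields $\Q\D_p$. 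For $n=2$ the commutation condition is vacuous and the recipe degenerates to $E=\langle\diag(\zeta,\zeta^{-1})\rangle$ with $c_1=I_2+\beta_{12}$, so the argument is uniform in $n$.
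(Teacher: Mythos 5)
Your proposal is correct and is essentially the paper's own proof: with $u=\zeta^{-1}$ your generators $e_i=\diag(\zeta^{-1},\dots,\zeta^{n-1},\dots,\zeta^{-1})$ coincide with the paper's $e_i=\diag(uI_{i-1},u^{1-n},uI_{n-i})$, and the commuting transvections $c_i=I_n+\beta_{in}$ are exactly the paper's choice. You have merely written out the ``routine calculation'' (the conjugation formula $\ls{c_i}h=h+(h_n-h_i)\beta_{in}$ and the use of $p\nmid n$) that the paper leaves implicit.
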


\begin{proof}
By assumption $O_p(G)=1$ because $p$ does not divide $n$.
Let $u\in\GL_1(q)$ be an element of order $p$ and put,
for all $1\leq i\leq n-1$,
$$e_i=\diag(uI_{i-1},u^{1-n},uI_{n-i}).$$

Let $E=\langle e_1,\dots,e_{n-1}\rangle$, so that $E\in\A_p(G)$ has
maximal rank $n-1=\rk(G)$. Then
$C_G(E)=T=C_G(T)\cong\GL_1(q)^{n-1}$ is the subgroup of $G$ formed by
all diagonal matrices with determinant $1$. We have
$$C_G(E_i)\cong G\cap\big(T\GL_2(q)\big)\qbox{and}
N_G(\langle e_i\rangle)=C_G(e_i)\cong\GL_{n-1}(q),$$
where such $\GL_{n-1}(q)$ contains $T$, and where
$$C_G(E_i)\cap N_G(\langle e_i\rangle)=T.$$
Choose $c_j=I_n+\beta_{jn}$ (see Notation \ref{nota:betas}). A routine calculation shows that
$c_j\in C_G(E_j)\setminus N_G(\langle e_j\rangle)$ has order $q$, and that
$[c_j,c_i]=1$ for all $i,j$. 
Thus $\{E_i,c_j\}$ is an admissible collection for $G$.
\end{proof}

\begin{prop}\label{cor:psln}
Let $G=\PSL_n(q)$ or $\PGL_n(q)$ with $p$ an odd prime prime dividing
$(n,q-1)$ or $q-1$ respectively, excluding the case $\PSL_3(q)$ for $p=n=3$. Then there exists an admissible  collection for $G$.
\end{prop}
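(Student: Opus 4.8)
The plan is to build admissible collections directly inside the projective groups. Unlike Propositions~\ref{prop:sln-d-gt-1} and~\ref{prop:sln-d-eq-1}, here $p$ divides the order of the centre by which we quotient, so Remark~\ref{rem:faithfultoquotient} does not apply (it needs a normal $p'$-subgroup): indeed $\GL_n(q)$ and, when $p\mid n$, $\SL_n(q)$ satisfy $O_p\cap Z>1$, and by Proposition~\ref{lem:sln-p-stable} they admit no admissible collection. Thus we cannot lift the problem to the linear cover and must work in $G$ itself. Throughout fix $u\in\FF_q^*$ of order $p$ (possible since $p\mid q-1$), and recall from \cite[Section~4.10]{GLSIII} that $\rk_p(\PGL_n(q))=n-1$ and $\rk_p(\PSL_n(q))=n-2$, both realised by the image of the $p$-torsion of the diagonal torus; the latter value fails only for $\PSL_3(q)$ at $p=3$, which is exactly the excluded case (see the last paragraph).

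For $G=\PGL_n(q)$, let $t_i=\diag(1,\dots,u,\dots,1)$ with $u$ in position $i$, and let $e_i$ be its image in $G$ for $1\le i\le n-1$. Since $\prod_{i=1}^{n}t_i=uI$ is a scalar, the $e_i$ obey a single relation and $E=\langle e_1,\dots,e_{n-1}\rangle$ is elementary abelian of rank $n-1=\rk_p(G)$, hence a maximal elementary abelian $p$-subgroup. I would then set $c_i=I+\beta_{ni}$ (Notation~\ref{nota:betas}). The verification is a short matrix check: $c_i$ commutes with $t_k$ in $\GL_n(q)$ for every $k\neq i$ because the $i$-th and $n$-th diagonal entries of $t_k$ coincide (both equal $1$), so $\bar c_i\in C_G(E_i)$; the conjugate $c_it_ic_i^{-1}=t_i+(u-1)\beta_{ni}$ is not diagonal, hence not congruent to a power of $t_i$ modulo scalars, giving $\bar c_i\notin N_G(\langle e_i\rangle)$; and $[c_i,c_j]=1$ since $\beta_{ni}\beta_{nj}=0$. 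Theorem~\ref{theorem:generalizationcentralizernormalizer} then yields $\Q\D_p$.

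For $G=\PSL_n(q)$ with $p\mid n$ the construction is the same up to a determinant-one twist. Put $t_i=\diag(1,\dots,u,\dots,u^{-1})$ with $u$ in position $i$ and $u^{-1}$ in position $n$, for $1\le i\le n-1$; these lie in $\SL_n(q)$, and since $p\mid n$ one computes $\prod_{i=1}^{n-1}t_i=uI\in Z(\SL_n(q))$, so their images $e_i$ obey one relation and $E=\langle e_1,\dots,e_{n-2}\rangle$ is elementary abelian of rank $n-2=\rk_p(G)$, hence maximal. Now I would take $c_i=I+\beta_{n-1,i}\in\SL_n(q)$: position $n-1$ is never the ``$u$-slot'' of any $t_k$ with $k\le n-2$, so the $i$-th and $(n-1)$-st diagonal entries of $t_k$ again agree for $k\neq i$, whence $c_i$ commutes with $t_k$ and $\bar c_i\in C_G(E_i)$; the conjugate $c_it_ic_i^{-1}$ is non-diagonal, so $\bar c_i\notin N_G(\langle e_i\rangle)$; and $[c_i,c_j]=1$. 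Theorem~\ref{theorem:generalizationcentralizernormalizer} again gives $\Q\D_p$.

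The main obstacle is not the matrix bookkeeping but the identification of a maximal-rank elementary abelian subgroup on which to build the collection. Besides the toral $E$ above, $\PGL_n(q)$ and $\PSL_n(q)$ carry non-toral elementary abelian $p$-subgroups, namely images of symplectic-type subgroups of $\GL_{p^k}(q)$, of rank $2k$ with $p^k\mid n$; one must confirm via \cite[Section~4.10]{GLSIII} that $2k$ never exceeds the toral ranks $n-1$, respectively $n-2$, so that $E$ is genuinely of maximal rank. This comparison holds in every case except $G=\PSL_3(q)$ at $p=3$: there the central scalar of order $3$ lies inside the diagonal torus, cutting the toral rank down to $n-2=1$, while the image of the extraspecial subgroup $3^{1+2}$ has rank $2$; thus the maximal elementary abelian subgroup is non-toral, the toral construction no longer has maximal rank, and the case must be excluded.
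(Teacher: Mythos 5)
Your $\PGL_n(q)$ construction is correct and is essentially the paper's own argument (the paper uses $e_i=\diag(uI_{i-1},1,uI_{n-i})$ and $c_j=I_n+\beta_{jn}$; yours is the same up to a harmless change of generators and a transpose). The problem is the $\PSL_n(q)$ case, where there is a genuine gap: your blanket claim that $\rk_p(\PSL_n(q))=n-2$ is false. By \cite[Theorem 10.6(1)]{GL}, as quoted in the paper, one has $\rk_p(\PSL_n(q))=n-2$ only when $(n)_p\geq(q-1)_p$; when $(n)_p<(q-1)_p$ the rank is $n-1$. Concretely, if $u\in\FF_q^*$ has order $p\cdot(n)_p$ (which exists precisely when $(n)_p<(q-1)_p$), the elements $e_i=\diag(uI_{i-1},u^{1-n},uI_{n-i})\in\SL_n(q)$ satisfy $e_i^p=\diag(u^p,\dots,u^p)\in Z(\SL_n(q))$, so their images generate an elementary abelian subgroup of rank $n-1$ inside the image of the diagonal torus. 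Your $E$, being the image of the order-$p$ diagonal subgroup, has rank $n-2$ and lies in $\Omega_1$ of that same abelian image, hence is \emph{properly contained} in a larger member of $\A_p(G)$. It is therefore not a maximal element of the poset, so Theorem~\ref{theorem:generalizationcentralizernormalizer} does not apply (its proof needs maximality of $E$ to rule out that the cycle is a boundary), your collection is not admissible in the sense of Definition~\ref{def:ad-coll}, and even if the cycle argument went through it would land in degree $n-3$ rather than $\rk_p(G)-1=n-2$, so $\Q\D_p$ would not follow. The misstatement also propagates to your closing paragraph: the excluded case $\PSL_3(q)$, $p=3$ is not the only place where ``the toral rank $n-2$'' fails.

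The fix is the case split the paper performs. When $(n)_p\geq(q-1)_p$ your direct rank-$(n-2)$ construction with $c_i=I+\beta_{n-1,i}$ does work (and is a mild variant of the paper's route, which instead embeds $\SL_{n-1}(q)$ into $\PSL_n(q)$ and invokes Proposition~\ref{prop:sln-d-eq-1}, using that $p\nmid n-1$). When $(n)_p<(q-1)_p$ you must instead build $E$ of rank $n-1$ from the order-$p\cdot(n)_p$ elements above, whose $p$-th powers are central and die in the quotient, and then take $c_j$ to be the class of $I_n+\beta_{jn}$ as before. Your opening observations (that Remark~\ref{rem:faithfultoquotient} is unavailable here and that Proposition~\ref{lem:sln-p-stable} blocks lifting to the linear cover) are correct and consistent with the paper's strategy.
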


\begin{proof}
First consider $G=\PSL_n(q)$. If $p=n=3$ then the $p$-rank is $2$ and we exclude this case by assumption. Otherwise, from \cite[Theorem 10.6(1)]{GL} we have
\[
\rk_p(G)=\begin{cases} n-2\text{, if $(n)_p\geq (q-1)_p$,}\\n-1\text{, if $(n)_p< (q-1)_p$.}\end{cases}
\]
For the former case, we embed $\SL_{n-1}(q)$ in $\PSL_n(q)$ as the subgroup
$$\left(\begin{array}{c|c}\SL_{n-1}(q)&
\begin{matrix}0\\\vdots\\0\end{matrix}\\\hline
0\ \dots\ 0&1\end{array}\right),$$ 
and, since $p$ does not divide $n-1$, we invoke Proposition~\ref{prop:sln-d-eq-1} to construct an
admissible  collection with the maximal elementary abelian $p$-subgroup $E$ of $G$ 
of rank $n-2$ and $c_1,\dots,c_{n-2}$ all sitting in $\SL_{n-1}(q)$.

For the case $G=\PSL_n(q)$ and $(n)_p< (q-1)_p$, let $z,u\in \FF_q^*$ be elements such that $z$ has order $(n)_p$ and $u^p=z$. Set
\[
e=\diag(z,\ldots,z)\in Z(\SL_n(q))
\]
and, as in Proposition \ref{prop:sln-d-eq-1}, set for all $1\leq i\leq n-1$,
\[
e_i=\diag(uI_{i-1},u^{1-n},uI_{n-i})\in \SL_n(q).
\]
Then $e_i^p=e$ and in the quotient $G=\SL_n(q)/Z(\SL_n(q))$ the classes of the elements $e_1,\ldots,e_{n-1}$ generate a subgroup $E\in \A_p(G)$ of rank $n-1=\rk_p(G)$. Choose $c_j$ to be the class of the element $I_n+\beta_{jn}\in \SL_n(q)$ (see Notation \ref{nota:betas}). A routine calculation shows that we have built an admissible collection for $G$.

Next let $G=\PGL_n(q)$. Then $\rk_p(G)=n-1$. We choose elements
$e_1,\dots,e_{n-1}$ as follows: Let $u\in\GL_1(q)$ be an element of order $p$ and and set
$e_i=\diag(uI_{i-1},1,uI_{n-i})$ for $1\leq i\leq n-1$.
Let $E=\langle e_1,\dots,e_{n-1}\rangle$, and define
$c_j=I_n+\beta_{jn}$ (see Notation \ref{nota:betas}).
It is clear that $c_j\in C_G(E_j)$ and that $[c_i,c_j]=1$ for all $1\leq j\leq n-1$. Moreover, a routine calculation shows that the matrices $\ls{c_j}e_j$ are not diagonal, and hence $c_j\notin N_G(\langle e_j\rangle)$
and $\{E_i,c_j\}$ is an admissible collection for $G$.
\end{proof}

\smallskip

Machine computations indicate that the cases excluded in Proposition
\ref{cor:psln} are probably genuine exceptions to the existence of
admissible collections.

We are now ready to handle the remaining classical groups (cf. the
paragraph above Notation~\ref{nota:sln} for our conventions). We follow 
the methods in \cite[Section 8]{GL}, and so let $G$ act on a vector
space $V$, defined over $\FF_q$, unless $G$ is unitary, in which case
$V$ is defined over $\FF_{q^2}$, and $V$  comes equipped with a
hermitian form. Let $\Isom(V)$ denote the full isometry group of
$V$. So, for instance, if $G$ has Lie type $A_{n-1}(q)$, then
$\Isom(V)=\GL_n(q)$, and $G$ is a central quotient of a group $G^*$
such that $\SL_n(q)\leq G^*\leq\GL_n(q)$.

\begin{thm}\label{thm:classic-dpos}
Let $G$ be a finite linear, unitary, symplectic or orthogonal group
defined over a field with $q$ elements (respectively $q^2$ elements if
$G$ is unitary), and let $p$ be an odd prime with $p\nmid q$.
Suppose that the following conditions hold:
\begin{itemize}\item[(i)] $p$ divides $|G|$ and $O_p(G)=1$.
\item[(ii)] $(p,q)\neq(3,2)$ unless $G$ is linear.
\item[(iii)] $p\nmid q+1$ if $G$ is unitary.
\end{itemize}
Then there exists an admissible 
collection for $G$.   
\end{thm}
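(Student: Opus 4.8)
The plan is to build the admissible collection directly from the structure theory of maximal-rank elementary abelian $p$-subgroups of classical groups, as developed in \cite[Section 8]{GL}. First I would fix such a subgroup $E=\langle e_1,\dots,e_r\rangle$ of rank $r=\rk_p(G)$ sitting inside a maximal torus, together with the orthogonal (for the relevant form) decomposition $V=V_1\perp\dots\perp V_r\perp W$, where $V_i=[V,e_i]$ is a non-degenerate $E$-invariant subspace of minimal dimension $e$ on which $e_i$ acts irreducibly with order $p$ and every $e_j$ with $j\neq i$ acts trivially, and $W=C_V(E)$ is too small to support an element of order $p$. The value of $e$ and the isometry type of each block $V_i$ depend on the type of $G$ and on the order $d$ of $q$ modulo $p$ (with the appropriate unitary twist), and these I would read off from the tables cited in \cite{GLSIII,GL}.

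Given this decomposition, $E_i=\langle e_1,\dots,\widehat{e_i},\dots,e_r\rangle$ acts trivially on $V_i\perp W$, so the block isometry group $\Isom(V_i)$, embedded in $G$ by acting as the identity on the non-degenerate complement $V_i^{\perp}$, centralizes every $e_j$ with $j\neq i$ and hence lies in $C_G(E_i)$. Because the blocks are mutually orthogonal, elements supported on distinct blocks automatically commute, so any choice $c_i\in\Isom(V_i)$ already yields $[c_i,c_j]=1$ for $i\neq j$ and $c_i\in C_G(E_i)$. The whole construction therefore reduces to a block-local problem: to find, for each $i$, an element $c_i$ of the appropriate (special, when $G$ is of $\SL$-, $\SU$- or $\Omega$-type) isometry group of the single block $V_i$ which does \emph{not} normalise $\langle e_i|_{V_i}\rangle$.

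The restriction $e_i|_{V_i}$ generates a Singer-type cyclic subgroup whose normaliser in $\Isom(V_i)$ is a cyclic maximal torus extended by a small cyclic group of field automorphisms. The key point is that $\Isom(V_i)$, intersected with the relevant special subgroup when needed, is strictly larger than this normaliser in all the admissible cases, so a suitable $c_i$ of determinant $1$ (resp. trivial spinor norm) exists. The hypotheses (ii) $(p,q)\neq(3,2)$ for non-linear $G$ and (iii) $p\nmid q+1$ for unitary $G$ are exactly the conditions keeping each block group large enough: they rule out the degenerate situations (for instance the $\GL_2(2)\cong\Sym_3$ collapse already encountered in Proposition~\ref{prop:sln-d-gt-1}, and the unitary case $e=1$ where $\Isom(V_i)\cong C_{q+1}$ is itself the torus) in which the block isometry group coincides with the torus normaliser and no valid $c_i$ can be found.

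Finally I would check that $E$ is a maximal \emph{element} of $\A_p(G)$, not merely of maximal rank: since $\dim W<e$, the space $W$ carries no element of order $p$, and on each block the Sylow $p$-subgroup of the centraliser of $e_i$ is exactly $\langle e_i|_{V_i}\rangle$; hence the $p$-part of $C_G(E)$ equals $E$, and any elementary abelian $p$-group containing $E$ lies in $C_G(E)$ and so equals $E$. With the maximality of $E$ and the collection $\{E_i,c_j\}$ in place, the hypotheses of Theorem~\ref{theorem:generalizationcentralizernormalizer} (equivalently Definition~\ref{def:ad-coll}) hold. I expect the main obstacle to be the uniform, type-by-type bookkeeping in the third paragraph: correctly pinning down the block dimension $e$ and isometry type for unitary, symplectic and orthogonal $G$, locating the torus normaliser inside each special block group, and determining in every family precisely which small pairs $(p,q)$ force equality with $\Isom(V_i)$ — these are exactly what generate the stated exceptions.
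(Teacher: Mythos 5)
Your proposal follows essentially the same route as the paper's proof: decompose $V$ into the blocks $V_i$ on which the $e_i$ act, observe that $\Isom(V_i)$ (intersected with $G$) lies in $C_G(E_i)$ and that block-supported elements commute, and use conditions (ii)--(iii) to guarantee that each block isometry group strictly contains the normaliser $\GL_1^\epsilon(q^e)\rtimes C_e$ of the Singer-type subgroup generated by $e_i|_{V_i}$, so a suitable $c_i$ exists. The only cosmetic differences are that the paper takes $E$ of maximal order (which makes poset-maximality automatic) and handles the special/adjoint isogeny types by a coprime-index reduction to the simply connected group together with Remark~\ref{rem:faithfultoquotient}.
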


\begin{remark}\label{except-iso2}
Some well known properties of small finite groups of Lie type
(cf. \cite[Remark 24.18]{MT} for instance), and
computational evidence using GAP \cite{GAP} show that there are no
admissible collections for small dimensional unitary, symplectic and
orthogonal groups for $p=3$ and $q=2$. 
So condition (ii) in the statement is necessary.
The case when $G$ is linear and $(p,q)=(3,2)$ is dealt
with in Proposition \ref{prop:sln-d-gt-1}. 
\end{remark}

\begin{proof}
From Propositions~\ref{prop:sln-d-eq-1} and~\ref{cor:psln}, we can
assume that if $G$ is linear, then $d>1$.
Condition (iii) is equivalent to saying that if $G$ is unitary, then
$d\neq2$. 

Let $V$ be the underlying vector space of $G$, and $\Isom(V)$ its full
isometry group, as explained above. From
\cite[Tables 22.1 and 24.2]{MT}, our assumptions
allow us to suppose that $G$ is simply connected.
Indeed, the index of the group of simply
connected type in $\Isom(V)$ is coprime to $p$, so that both have have
isomorphic Sylow $p$-subgroups. Thus, if there is an admissible
collection for the group of simply connected type, then its inclusion
in $\Isom(V)$ yields an admissible collection for
$\Isom(V)$. Hence, if $G$ is a central quotient of $G^*$, the
image of such admissible collection is an admissible collection for
$G$, by Remark~\ref{rem:faithfultoquotient}. For instance, if $G$
has Lie type $A_{n-1}(q)$, then we can suppose that
$G=\SL_n(q)$. Indeed, if $d>1$ (i.e. if $q\not\equiv1\pmod p$), then
an admissible collection for $\SL_n(q)$ gives admissible
collections for any group $G^*/Z$, where
$\SL_n(q)\leq G^*\leq\GL_n(q)$ and $Z\leq Z(G^*)$.

Let $E$ be a maximal elementary abelian $p$-subgroup of $G$ of maximal
order. 
Choose generators $\{e_1,\dots,e_r\}$ of $E$ and a basis
$\{v_1,\dots,v_n\}$ of $V$ such that $e_i$ acts as the identity
everywhere except on $V_i=\langle v_{(i-1)d+1},\dots,v_{id}\rangle$
for all $1\leq i\leq r$. Put $V_0=\langle v_{rd+1},\dots,v_n\rangle$,
possibly $V_0=\{0\}$. Thus $V=\oplus_{0\leq i\leq r}V_i$. We refer the
reader to \cite[Table 10:1]{GL} for the values of $r$
depending on the type of $G$, and the values of $n$ and $d$. (Recall
that under our assumptions, $G$ and $\Isom(V)$ have isomorphic Sylow
$p$-subgroups.) 

From \cite[8-1]{GL}, we have, for all $1\leq i\leq r$,
$$C_G(e_i)=G\cap\big(\Isom(V/V_i)\times\GL_1^\epsilon(q^e)\big).$$
It follows that
\begin{multline*}
  C_G(E_i)=\bigcap_{j\neq i}C_G(e_j)=\\
  G\cap\bigg(\GL_1^\epsilon(q^e)^{(1)}\times\cdots\times\GL_1^\epsilon(q^e)^{(i-1)}\times
  \Isom(V_i\oplus V_0)\times\GL_1^\epsilon(q^e)^{(i+1)}\times\cdots\times\GL_1^\epsilon(q^e)^{(r)}\bigg)
\end{multline*}
for $1\leq i\leq r$, and
$$C_G(E)=G\cap\big(\GL_1^\epsilon(q^e)^{(1)}\times\cdots\times\GL_1^\epsilon(q^e)^{(r)}\times\Isom(V_0)\big),$$
where $\GL_1^\epsilon(q^e)^{(i)}=C_{\Isom(V_i)}({e_i}_{|_{V_i}})$ is
the centralizer of the action of $e_i$ restricted to $V_i$, where the
sign $\epsilon=\pm1$ and the value of $e$ depend on the parity of $d$
and the type of $G$, and where $\Isom(V_0)$ has the same type as $G$.
Explicitly,
\begin{itemize}\item
If $G$ is linear: $\epsilon=+$ and $d=e$.
\item If $G$ is unitary: if $d\equiv2\pmod4$, then $\epsilon=-$, and
$\epsilon=+$ otherwise. We put $e=2d$ if $d$ is odd, $e=\frac d2$ if
$d\equiv1\pmod4$ and $e=d$ if $d\equiv0\pmod4$.
\item If $G$ is symplectic or orthogonal, put $f=\frac12\lcm(2,d)$ and hence
$\epsilon=+$ if $f$ is odd and $-$ otherwise, and put $e=2f$. (We
refer the reader to \cite[Section 8]{GL} for the type of the
orthogonal space, as it does not impact on our argument.)
\end{itemize}

By definition, $\dim(V_0)<d$ (resp $2d$ if $G$ is symplectic or
orthogonal), saying that $p\nmid|\Isom(V_0)|$.

Also,
$$N_G(\langle e_i\rangle)=G\cap\bigg(\big(\GL_1^\epsilon(q^e)^{(i)}\rtimes
C_e^{(i)}\big)\times\Isom(\oplus_{j\neq i}V_j)\bigg),$$
where $e$ is as above.

Conditions (i)-(iii) in the statement ensure that
$\big(\GL_1^\epsilon(q^d)^{(j)}\rtimes C_e^{(j)}\big)\lneq\Isom(V_j)$,
so that we can pick $c_j\in G$ such that $c_j$ acts as the identity on
$V_i$ for all $i\neq j$ and the restriction
${c_j}_{|V_j}\in\Isom(V_j)\setminus\big(\GL_1^\epsilon(q^d)^{(j)}\rtimes
C_e^{(j)}\big)$. 

Since $V=\oplus_{0\leq i\leq r}V_i$, we have
$[\Isom(V_i),\Isom(V_j)]=1$ for all $i\neq j$, and therefore the
elements $c_j$ commute pairwise, i.e.
$[c_i,c_j]=1$ for any $1\leq i,j\leq r$. We conclude that
$\{E_i,c_j\}$ is an admissible collection for $G$.
\end{proof}

\smallskip

Our objective in this work was to devise a simpler argument to show
the Quillen dimension at $p$ property for the symmetric and
alternating groups, and for the finite classical groups
in non-defining characteristic. Although our results do not fully meet
our initial objective, we believe that our methods can be further
generalized to tackle the cases excluded from the present paper, and
also $p$-extensions, i.e., almost-simple groups with an elementary
abelian $p$-group inducing outer automorphisms. The first author will
pursue this aim in a subsequent work.  

\medskip
\noindent{\bf Acknowledgements.}\;
The authors are sincerely grateful to Gunter Malle for his helpful suggestions in this research work.


\begin{thebibliography}{99}

%\bibitem{alp-fong} J. Alperin and P. Fong, {\em Weights for 
%Symmetric and General Linear Groups}, J. Algebra 131 (1990), 2--22.

\bibitem{AM} A. Adem and R.J. Milgram, {\em Cohomology of finite
groups}, Grundlehren der Mathematischen Wissenschaften Vol. 309, 2nd 
Edition, Springer, 2004.
  
\bibitem{AS1993} M.~Aschbacher, S.~Smith, {\it 
On Quillen's conjecture for the p-groups complex}, Ann. of Math. (2) 137
  (1993), no. 3, 473--529.  

\bibitem{BM} M. Brou\'e, G. Malle, {\em Th\'eor\`emes de Sylow
  g\'en\'eriques pour les groupes r\'eductifs sur les corps finis},
  Math. Ann. 292 (1992), 241--262.

\bibitem{Diaz2016} A. D\'iaz Ramos, {\it On Quillen's conjecture for
  p-solvable groups}, Journal of Algebra, 513, 1, 2018, 246--264.

\bibitem{GAP} The GAP Group, GAP -- Groups, Algorithms, and Programming, Version 4.10.1; 2019. (https://www.gap-system.org)
  
\bibitem{Gor1980} D.~Gorenstein, \emph{Finite groups}, Second
  edition. Chelsea Publishing Co., New York, 1980. 

\bibitem{GL} D. Gorenstein, R. Lyons, {\em The local structure of
  finite groups of characteristic 2 type}. Mem. Amer. Math. Soc. 42
  (1983), no. 276. 
  
\bibitem{GLSIII} D.~Gorenstein, R.~Lyons, R.~Solomon, {\it The
  classification of the finite simple groups}. Number 3. Mathematical
  Surveys and Monographs, Volume 40, Number 3, AMS, 1998.

\bibitem{MT} G. Malle, D. Testerman, {\em Linear algebraic groups and
  finite groups of Lie type}, Cambridge, 2011.

\bibitem{Quillen1978} D.~Quillen, {\it Homotopy properties of the poset
  of nontrivial p-subgroups of a group},  Adv. Math. 28 (1978), no. 2,
  101--128. 

\bibitem{rotman}
J. Rotman, {\em An introduction to algebraic topology},
Graduate Texts in Mathematics, 119. Springer-Verlag, New York, 1988.

\end{thebibliography}
\end{document}